\definecolor{darkblue}{rgb}{0,0,0.44} %22-84-113
\newcommand{\R}{\mathbb{R}}
\newcommand{\supp}{\operatorname{supp}}
\newcommand{\Z}{\mathbb{Z}}
\newcommand{\A}{\mathcal{A}}
\newcommand{\N}{\mathbb{N}}
\renewcommand{\P}{\mathbb{P}}
\renewcommand\appendix{\par
\gdef\thetable{\Alph{table}}
\gdef\thefigure{\Alph{figure}}
\section*{Appendix: some properties of entropy}
\gdef\thesection{\Alph{section}}
\setcounter{section}{1}}
\theoremstyle{definition}
  \newtheorem{defin}{Definition}[section]
  \newtheorem*{setting}{Setting}
  \newtheorem{question}[defin]{Question}
\theoremstyle{plain}
  \newtheorem{thm}[defin]{Theorem}
  \newtheorem{main thm}{Theorem}
  \newtheorem{prop}[defin]{Proposition}
  \newtheorem{fact}{Fact}[section]
  \newtheorem{cor}[defin]{Corollary}
  \newtheorem{lemma}[defin]{Lemma}
\theoremstyle{remark}
  \newtheorem{remark}[defin]{Remark}
  \title{Subshifts with slow complexity and simple groups with the Liouville property}
  \author{Nicol\'as Matte Bon\thanks{Universit\'e Paris Sud; nicolas.matte.bon@ens.fr}}
\date{
May 2014}
\begin{document}
\maketitle
  \abstract{We study random walk on topological full groups of subshifts, and show the existence of  infinite, finitely generated, simple groups with the Liouville property. Results by Matui and Juschenko-Monod have shown that the derived subgroups of topological full groups of minimal subshifts provide the first examples of finitely generated, simple amenable  groups. We show that if the (not necessarily minimal) subshift has a complexity function that grows slowly enough (e.g. linearly), then every symmetric and finitely supported probability measure on the topological full group has trivial Poisson-Furstenberg boundary. We also get explicit upper bounds for the growth of F\o lner sets.}
\section{Introduction}

In the early 50s Graham Higman gave the first example of a finitely generated, infinite simple group \cite{Higman:simplefinitelygenerated}. Later,  Hall \cite{Hall},  Gorju\v{s}kin \cite{Gorjuskin}, and  Schupp \cite{Schupp} showed that any countable group can be embedded in a 2-generated simple group. Thus, finitely generated simple groups can be arbitrarily ``large'' in some sense. It is considerably less understood how ``small'' can such groups be, from the point of view of their  asymptotic geometry.
 
It follows from Gromov's theorem \cite{Gromov:polynomialgrowth} that a finitely generated simple group can not have polynomial growth, and it is an open question, due to Grigorchuk (see  \cite[Problem 15]{Grigorchuk:survey}), whether it can have sub-exponential growth. Recall that groups of sub-exponential growth are amenable. Recently,  Juschenko and  Monod  \cite{Juschenko-Monod:simpleamenable} have  proven that there do exist finitely generated, simple groups that are amenable; the groups that they consider were known to be simple and finitely generated  by results of Matui \cite{Matui:simple}.

 We consider here a third property of groups that lies between sub-exponential growth and amenability: the Liouville property for finite-range symmetric random walks. We prove that there exist  simple groups with the Liouville property.

A group equipped with a probability measure $(G,\mu)$ has \emph{the Liouville property} if  the Poisson-Furstenberg boundary  is trivial; equivalently, if every bounded $\mu$-harmonic function on $G$ is constant on the subgroup generated by the support of $\mu$. Here a function $f:G\to \R$ is said to be $\mu$-harmonic if $f*\mu=f$, where $(f*\mu)(g)=\sum_{h\in G}f(gh)\mu(h)$. If the support of $\mu$ generates $G$ the  measure is said to be \emph{non-degenerate}.

 When no measure is specified,  we say  that \emph{the group $G$ has the Liouville property} if $(G,\mu)$ has the Liouville property for  \emph{every symmetric}, \emph{finitely supported} probability measure $\mu$ on $G$, including degenerate measures. Finitely generated groups with sub-exponential growth have the Liouville property (this is due to Avez \cite{Avez}), and groups with the Liouville property are amenable. More precisely, a group is amenable if, and only if, it admits a symmetric non-degenerate measure $\mu$ with trivial Poisson-Furstenberg boundary  (one implication is due to Furstenberg, see \cite[Theorem 4.2]{Kaimanovich-Vershik}, the other to Kaimanovich and Vershik \cite[Theorem 4.4]{Kaimanovich-Vershik} and to Rosenblatt \cite{Rosenblatt}).
However, there are finitely generated amenable groups, such as the wreath product $\Z/2\Z\wr \Z^3$,  that  admit no \emph{finitely supported}, non-degenerate measures with trivial boundary, see Kaimanovich and Vershik \cite[Proposition 6.1]{Kaimanovich-Vershik}; on some amenable groups, a non-degenerate measure with trivial boundary might not even be chosen to have finite entropy by a result of Erschler \cite[Theorem 3.1]{Anna:Liouville}.   For a recent survey on Poisson-Furstenberg boundaries of random walks on discrete groups, see \cite{Anna:ICM}.

\begin{thm}\label{T:simple Liouville}
There exist  finitely generated infinite groups that are simple and have the Liouville property (for every symmetric, finitely supported probability measure). Moreover, there are uncountably many pairwise non-isomorphic such groups.
\end{thm}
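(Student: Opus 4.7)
The plan is to apply the main technical result advertised in the abstract---that topological full groups of subshifts with slowly growing (e.g.\ linear) complexity have the Liouville property---to a carefully chosen uncountable family of minimal subshifts. Sturmian subshifts, parametrized by an irrational slope, have complexity $p(n)=n+1$ and provide such a family; for any Sturmian $(X,\sigma)$ the main theorem will give the Liouville property for the topological full group. A general remark is that if a group $G$ has the Liouville property in the strong sense stipulated in the excerpt (for \emph{every} symmetric finitely supported measure, including degenerate ones), then so does any subgroup $H\leq G$: any symmetric finitely supported $\mu$ on $H$ is such a measure on $G$, and the bounded $\mu$-harmonic functions are then constant on $\langle \supp\mu\rangle\subseteq H$, which is exactly the Liouville property for $(H,\mu)$.

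To extract a finitely generated infinite simple group, I would apply Matui's theorem: for any minimal subshift the commutator subgroup of the topological full group is infinite and simple, and it is finitely generated thanks to the expansiveness of the system. Combined with the observation above, the commutator subgroup is a finitely generated infinite simple group with the Liouville property, which proves the first assertion.

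For the uncountable family, I would invoke the rigidity theorem of Giordano--Putnam--Skau, extended to the commutator subgroup by Matui: the commutator subgroup of the topological full group of a minimal Cantor system is a complete isomorphism invariant of the flip-conjugacy class of the system. Since Sturmian subshifts associated to distinct slopes (modulo $\alpha\mapsto 1-\alpha$) are pairwise non flip-conjugate and there are continuum many such classes, this yields uncountably many pairwise non-isomorphic simple finitely generated Liouville groups.

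The main obstacle is, of course, the deep probabilistic estimate giving the Liouville property for topological full groups of subshifts with slow complexity, which is the novel contribution of the paper; all other ingredients (Matui's simplicity and finite generation, the existence of an uncountable family of linear-complexity Sturmian subshifts, and the rigidity theorem) are classical. A minor but important point to verify is that the main theorem indeed covers every symmetric finitely supported measure, including degenerate ones, so that passing to the commutator subgroup preserves the strong Liouville property.
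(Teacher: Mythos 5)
Your proposal is correct and follows essentially the same route as the paper: Theorem \ref{main} applied to Sturmian subshifts (complexity $n+1$), Matui's simplicity and finite generation of $[[\tau]]'$, the inheritance of the strong Liouville property by subgroups, and the Giordano--Putnam--Skau / Bezuglyi--Medynets rigidity to distinguish uncountably many groups. The only minor discrepancy is attribution: the extension of the rigidity theorem to commutator subgroups is due to Bezuglyi and Medynets rather than Matui.
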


Theorem \ref{T:simple Liouville} is a consequence of Theorem \ref{main} below.

The groups that we consider to prove Theorem \ref{T:simple Liouville} are a sub-class of the finitely generated simple groups discovered by Matui \cite[Theorem 4.9, Theorem 5.4]{Matui:simple}  and  considered by Juschenko and Monod \cite{Juschenko-Monod:simpleamenable}. They arise as the commutator subgroups of  the \emph{topological full groups} of some minimal subshifts, on which we assume that the  \emph{complexity} grows slowly enough (these notions are defined in Subsection \ref{S: topological full groups}). Our approach does not rely on results in \cite{Juschenko-Monod:simpleamenable} and yields a new proof of amenability of the groups that we consider.  It also shows amenability of the topological full groups of a class of non-minimal subshifts with slow complexity (see Subsection \ref{S: amenability}). 

\subsection{Cantor systems, subshifts, and topological full groups} \label{S: topological full groups}

Throughout the paper let $\Sigma$ denote a compact, metrizable, totally disconnected topological space, and let $\tau$ be a homeomorphism of $\Sigma$. 

The \emph{topological full group} of  the dynamical system $(\Sigma, \tau)$ is the group $[[\tau]]$ of homeomorphisms of $\Sigma$ that locally coincide with a power of $\tau$, namely the group of homeomorphisms $g\in\operatorname{Homeo}(\Sigma)$ such that for every $x\in \Sigma$ there exists an open neighbourhood $U$ of $x$ and an integer $k\in \Z$ for which $g|_U=\tau^k|_U$. In other words,  $g\in\operatorname{Homeo}(\Sigma)$ belongs to $[[\tau]]$ if and only if there exists a continuous function $k_g:\Sigma\to\Z$, the \emph{orbit cocycle}, such that
\[\forall x\in \Sigma, \qquad g(x)=\tau^{k_g(x)}(x).\] 
Recall that a topological dynamical system is said to be \emph{minimal} if every orbit is dense.

The  dynamical system $(\Sigma ,\tau)$  is called a \emph{Cantor system} whenever $\Sigma$ is homeomorphic to the Cantor set; in our setting, this amounts to non-existence of isolated points in $\Sigma$.
 
 Giordano, Putnam and Skau study in \cite{Giordano-Putnam-Skau:flipconjugacy} the topological full group of a Cantor minimal system. They show that the structure of this countable group characterizes completely the dynamics of such systems: two Cantor minimal systems $(\Sigma, \tau)$ and $(\Sigma', \tau')$ have isomorphic topological full groups if and only if $(\Sigma', \tau')$ is topologically conjugate to $(\Sigma, \tau)$ or to $(\Sigma,\tau^{-1})$ \cite[Corollary 4.4]{Giordano-Putnam-Skau:flipconjugacy}. The proof of this  uses results from Boyle's thesis \cite{Boyle} on topological orbit-equivalence, see  \cite[Theorem 3.2]{Boyle-Tomiyama}. In fact, also the structure of the \emph{commutator subgroup} of the topological full group of a Cantor minimal system  characterizes the system in the same way; this  is due to Bezuglyi and Medynets \cite[Theorem 5.2]{Bezuglyi-Medynets:flipconjugacy}.

Let $\mathcal{A}$ be a finite alphabet and endow $\A^\Z$ with the product of the discrete topology on $\A$. The \emph{shift} over $\A$ is the Cantor system $(\A^\Z, \tau)$ where  $\tau$ acts on $\A^\Z$ by
$$\tau:\: \cdots x_{-3}x_{-2}x_{-1}.x_{0}x_1x_2x_3\cdots\mapsto \cdots x_{-2}x_{-1}x_{0}.x_1x_2x_3x_4\cdots.$$
A \emph{subshift} is a dynamical system $(\Sigma, \tau)$ where $\Sigma\subset \A^\Z$ is a closed subset which is invariant under the shift. A subshift is not necessarily a Cantor system, note however that a \emph{minimal} infinite subshift is automatically a Cantor system.

Matui shows in \cite{Matui:simple} that if  $(\Sigma,\tau)$  is a minimal Cantor system, the \emph{commutator subgroup} $[[\tau]]'$ of the topological full group is simple (and infinite), see \cite[Theorem 4.9]{Matui:simple} and the remark above it; if moreover $(\Sigma, \tau)$ is a minimal subshift, $[[\tau]]'$ is also finitely generated \cite[Theorem 5.4]{Matui:simple}. In this case, he also proves in \cite{Matui:exponentialgrowth} that the group $[[\tau]]'$ contains free sub-semigroups and thus it has exponential growth.  Juschenko and  Monod \cite{Juschenko-Monod:simpleamenable} prove that for every minimal Cantor system $(\Sigma,\tau)$  the group $[[\tau]]$ is amenable; this was conjectured by  Grigorchuk and  Medynets \cite{Grigorchuk-Medynets:conjectureamenable}. See also Juschenko and  de la Salle \cite{Juschenko-Salle:wobbling} where a part of \cite{Juschenko-Monod:simpleamenable} is generalized and simplified using recurrence of the random walk on the orbital Schreier graphs of the group. This amenability result is now part of a more general amenability criterion due to Juschenko, Nekrashevych and de la Salle \cite{Juschenko-Nekrashevych-Salle:recurrentgrupoids}.

The \emph{complexity}, or \emph{word-complexity}, of a subshift $(\Sigma,\tau)$ is the function $\rho:\N\rightarrow \N$ that counts the number of words of length $n$ in the alphabet $\A$ that appear as sub-words of sequences in $\Sigma$.  See \cite{Complexity} for a survey on the complexity function. This function is closely related to topological entropy, as  it is well known that the topological entropy of $(\Sigma, \tau)$ is given by the sub-additive limit $h_{\operatorname{top}}(\tau)=\lim_{n\to \infty}\frac{1}{n}\log\rho(n)$. However we do not  need the notion of topological entropy here.

Our main result concerns another notion of entropy: the asymptotic entropy of random walks on $[[\tau]]$. Given a probability measure $\mu$ on a countable group $G$, its  \emph{entropy}  $H(\mu)$ is the quantity $H(\mu)=-\sum_{g\in G} \mu(g)\log\mu(g)\in[0,+\infty]$. The \emph{random walk entropy} (also called \emph{asymptotic} or \emph{Avez entropy}) is the limit
$$h(\mu)=\lim_{n\to\infty}\frac{1}{n}H(\mu^{*n}),$$
where $\mu^{*n}$ is the $n$-th convolution power of $\mu$. The limit exists by sub-additivity. The entropy criterion  of  Kaimanovich and Vershik \cite[Theorem 1.1]{Kaimanovich-Vershik} and Derrienic \cite{Derrienic}  states that if $H(\mu)<\infty$ (e.g.\ if $\mu$ is finitely supported) the Liouville property for $(G,\mu)$ is equivalent to $h(\mu)=0$.

\begin{thm}\label{main}
Let $(\Sigma, \tau)$ be a subshift without isolated periodic points. Suppose that  the complexity $\rho$ of $\Sigma$ satisfies 
\[\lim_{n\to\infty}(\frac{\log n}{n})^2\rho(n)=0.\] 
Then for every finitely supported symmetric probability measure $\mu$ on $[[\tau]]$ the random walk entropy vanishes.

 More precisely, there exists a constant $C>0$ such that for every $n\geq 1$
  \[H(\mu^{*n})\leq C\rho(\lceil C\sqrt{n\log n}\rceil )\log n.\]
\end{thm}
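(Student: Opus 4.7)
The plan exploits the fact that every $g \in [[\tau]]$ is determined by its orbit cocycle $k_g \colon \Sigma \to \Z$, which is continuous and integer-valued, hence locally constant: it depends on $x$ only through the pattern of $x$ in some window $[-L,L]$ around position zero. If in addition $\|k_g\|_\infty \leq M$, then $g$ is specified by a function from the at most $\rho(2L+1)$ patterns of length $2L+1$ appearing in $\Sigma$ to $\{-M,\ldots,M\}$, giving at most $(2M+1)^{\rho(2L+1)}$ such elements. The strategy is to show that with high probability the random product $g_n = s_1 s_2 \cdots s_n$ has both cocycle and window bounded by $O(\sqrt{n\log n})$, and then to convert this into the claimed entropy bound.

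Fix $x \in \Sigma$ and introduce the backward trajectory $d_{n+1}(x) = 0$, $d_i(x) = d_{i+1}(x) + k_{s_i}\bigl(\tau^{d_{i+1}(x)}(x)\bigr)$, so that $s_i s_{i+1}\cdots s_n(x) = \tau^{d_i(x)}(x)$ and in particular $k_{g_n}(x) = d_1(x)$. For fixed $x$, the process $(d_i(x))_i$ has independent increments bounded by $M_0 := \max_{s \in \supp\mu}\|k_s\|_\infty$; moreover, the induced Markov chain on $\Z$ is reversible with respect to counting measure thanks to the symmetry of $\mu$. A Hoeffding-type concentration, obtained by extracting the martingale part from the position-dependent drift, yields
\[\P\bigl(\max_{1 \leq i \leq n+1}|d_i(x)| > M\bigr) \leq 2n\exp\!\bigl(-cM^2/n\bigr)\]
for some constant $c > 0$ depending only on $M_0$.

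The main technical step is to upgrade this bound to one that is uniform in $x \in \Sigma$, which is what controls the global window of $g_n$ as an element of $[[\tau]]$. Let $w_0$ be a uniform window bound for the generators $s \in \supp\mu$. The key combinatorial observation is that if the trajectory for $x$ stays within $[-M, M]$, then each increment $k_{s_i}(\tau^{d_{i+1}(x)}(x))$ depends on $x$ only through the pattern of $x$ in $[d_{i+1}(x) - w_0, d_{i+1}(x) + w_0] \subseteq [-M-w_0, M+w_0]$; thus the whole trajectory depends on $x$ only through the pattern of $x$ on $[-M-w_0, M+w_0]$. A union bound over the at most $\rho(2M+2w_0+1)$ such patterns then gives
\[\P\bigl(\exists x \in \Sigma,\, i \leq n+1 : |d_i(x)| > M\bigr) \leq \rho(2M+2w_0+1)\cdot 2n\exp(-cM^2/n).\]
Setting $M = C\sqrt{n\log n}$ for $C$ sufficiently large and using the complexity hypothesis $\rho(n) = o((n/\log n)^2)$ makes this probability $o(1/n)$.

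Denote by $A$ the complementary good event. On $A$, $g_n$ has cocycle bounded by $M$ and window at most $M+w_0$, so by the first paragraph $|\supp(\mu^{*n})\cap A| \leq (2M+1)^{\rho(2M+2w_0+1)}$. Decomposing
\[H(\mu^{*n}) \leq \log 2 + \P(A)\log|\supp(\mu^{*n})\cap A| + \P(A^c)\cdot n\log|\supp\mu|\]
and inserting $\P(A^c) = o(1/n)$ together with $\log|\supp(\mu^{*n})\cap A| = O(\rho(\sqrt{n\log n})\log n)$ yields the claimed inequality $H(\mu^{*n}) \leq C'\rho(\lceil C'\sqrt{n\log n}\rceil)\log n$ for an appropriate constant $C'$. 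The \textbf{main obstacle} is the concentration of the orbital trajectory: because each step $k_{s_i}(\tau^{d_{i+1}(x)}(x))$ has conditional mean depending on the current position, the walk is not a martingale and the Azuma inequality does not apply directly, so the symmetry of $\mu$ (via the reversibility of the orbital chain) must be leveraged carefully to obtain the Gaussian-type tail on the scale $\sqrt{n\log n}$ that drives the whole argument.
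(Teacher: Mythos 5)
Your overall architecture coincides with the paper's: bound the running maximum of the orbital cocycle at scale $\sqrt{n\log n}$, observe that on this event $k_{g_n}$ is constant on each cylinder of that depth (your "combinatorial observation" is exactly the paper's coupling lemma, proved there by induction along the word $s_1\cdots s_n$), take a union bound over the $\rho(O(\sqrt{n\log n}))$ cylinders, count the elements determined by their cocycle values on those cylinders, and finish with the standard entropy decomposition. All of that is sound and matches the paper step for step.

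The genuine gap is the one you flag yourself: the tail bound $\P(\max_{i}|d_i(x)|>M)\leq 2n\exp(-cM^2/n)$ is asserted, not proved, and the route you sketch for it would fail. The Doob decomposition of $(d_i(x))_i$ into a martingale plus a predictable drift gives no control: the drift is a sum of up to $n$ terms $\sum_s\mu(s)k_s(\tau^{d}(x))$, each bounded by $M_0$ but with no pointwise cancellation (symmetry of $\mu$ gives $k_{s^{-1}}(z)=-k_s(s^{-1}z)$, not $-k_s(z)$, so the one-step mean at a given position need not vanish), so a priori the drift can be of order $n$ and Azuma applied to the martingale part says nothing about $d_i$ itself. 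The paper closes exactly this hole by viewing $(g_jx)_j$ as a nearest-neighbour random walk on the Schreier graph of the orbit with the symmetric, $\delta$-uniformly elliptic kernel $p(y,z)=\sum_{s:sy=z}\mu(s)$, invoking the Hebisch--Saloff-Coste Gaussian upper bound $p_n(x,y)\leq C n^{-1/2}\exp(-\operatorname{dist}(x,y)^2/Dn)$ for such reversible kernels (transferred to the $\Z$-coordinate via the $K$-Lipschitz embedding $gx\mapsto k_g(x)$, with constants depending only on $K$ and $\delta$ --- this uniformity in $x$ is essential for the union bound), and then running a reflection-principle/stopping-time argument to upgrade the endpoint bound to a bound on $\max_{j\leq n}|k_{g_j}(x)|$; a Carne--Varopoulos bound would serve equally well, but some input specific to reversible chains is unavoidable here. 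A secondary omission: your identification of the trajectory with positions $\tau^{d_i(x)}(x)$ in $\Z$ tacitly assumes $x$ is non-periodic (otherwise the orbit is finite and $k_g(x)$ is only determined mod the period); the paper uses the no-isolated-periodic-points hypothesis to get density of non-periodic points, chooses a non-periodic representative in each cylinder, and separately handles the case of finite Schreier graphs in the concentration estimate. Neither issue affects the skeleton of your argument, but the concentration step is precisely where the real content of the theorem lives.
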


\begin{remark} \begin{enumerate}
\item Theorem \ref{main} applies in particular  if the complexity grows linearly, i.e.\ if there exist a constant $C>0$ such that  $\rho(n)\leq Cn$. More generally, it applies if there exist $\alpha\in [1,2)$ and $C>0$ such that $\rho(n)\leq Cn^\alpha$.  This includes some well-studied classes of minimal subshifts, see Section \ref{S: examples}. 
\item The measure $\mu$ in the statement is not assumed to be non-degenerate; in fact the whole topological full group is not necessarily  finitely generated (cf.\ \cite[Corollary 5.5]{Matui:simple}). 
\end{enumerate}
\end{remark}
\begin{remark}Theorem \ref{main}  implies Theorem  \ref{T:simple Liouville}: a subshift  $(\Sigma, \tau)$ satisfying  the assumption in the statement can be chosen to be minimal,  and in this case the commutator subgroup $[[\tau]]'$ is simple and finitely generated by the results of Matui \cite[Theorem 4.9, Theorem 5.4]{Matui:simple}. 
For example,  a class  of minimal subshifts satisfying the assumptions of Theorem \ref{main} is given by the \emph{Sturmian  subshifts} \cite{Morse-Hedlund} obtained by coding irrational rotations of the circle  (we recall this construction in Subsection \ref{S: irrational rotations}). Sturmian subshifts are minimal and have complexity $\rho(n)=n+1$, see \cite[Section 2.1.2]{Sturmian}. Moreover, a spectral argument shows that distinct rotations give rise to non-conjugate Sturmian subshifts.  By the results of Giordano, Putnam and Skau \cite[Corollary 4.4]{Giordano-Putnam-Skau:flipconjugacy}, and Bezuglyi and Medynets \cite[Theorem 5.2]{Bezuglyi-Medynets:flipconjugacy}, this provides uncountably many examples of finitely generated simple Liouville groups, as claimed in Theorem \ref{T:simple Liouville}.
\end{remark}
\subsection{Application to amenability and F\o lner function}\label{S: amenability}
Theorem \ref{main}  also implies that the topological full group $[[\tau]]$ is amenable, since amenability of a group is equivalent to amenability of its finitely generated subgroups. This shows that the topological full group is amenable even without the assumption that $(\Sigma, \tau)$ is  {minimal}, made in \cite{Juschenko-Monod:simpleamenable}, if we assume instead that the complexity of $\Sigma$ grows slowly enough.
 \begin{cor}\label{C: amenable}
 Under the assumptions of Theorem \ref{main}, the group $[[\tau]]$ is amenable.
 \end{cor}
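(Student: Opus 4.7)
The plan is to deduce amenability of $[[\tau]]$ from Theorem \ref{main} by combining two standard facts: first, that a group is amenable iff every one of its finitely generated subgroups is amenable; and second, Kaimanovich--Vershik's characterization of amenability via the existence of a symmetric non-degenerate measure with trivial Poisson--Furstenberg boundary (as recalled in the introduction of the paper).

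More precisely, I would argue as follows. Let $H \leq [[\tau]]$ be an arbitrary finitely generated subgroup. Choose a finite symmetric generating set $S$ of $H$ (with $1 \in S$, say) and let $\mu$ be the uniform probability measure on $S$; this is a finitely supported, symmetric, non-degenerate probability measure on $H$. Viewing $\mu$ as a finitely supported symmetric probability measure on the ambient group $[[\tau]]$ (supported inside $H$), Theorem \ref{main} applies and yields $h(\mu) = 0$. Since the random walk driven by $\mu$ stays in $H$, the asymptotic entropy computed in $H$ coincides with the one computed in $[[\tau]]$, so the random walk entropy of $(H, \mu)$ is zero as well. Because $\mu$ is finitely supported, $H(\mu) < \infty$, and the Kaimanovich--Vershik--Derriennic entropy criterion cited in the introduction gives that $(H, \mu)$ has the Liouville property. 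In particular $(H,\mu)$ has trivial Poisson--Furstenberg boundary, with $\mu$ symmetric and non-degenerate on $H$, so by the Kaimanovich--Vershik--Rosenblatt criterion $H$ is amenable.

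Since every finitely generated subgroup of $[[\tau]]$ is amenable, $[[\tau]]$ is itself amenable, which concludes the argument. There is essentially no obstacle here beyond correctly invoking Theorem \ref{main} on measures supported in a prescribed finitely generated subgroup; the only thing worth double-checking is that the notion of asymptotic entropy used in Theorem \ref{main} is insensitive to whether we regard $\mu$ as a measure on $H$ or on $[[\tau]]$, which is immediate because the $n$-th convolution $\mu^{*n}$ is the same measure in both cases.
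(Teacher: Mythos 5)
Your argument is correct and is exactly the one the paper intends: the paper disposes of this corollary in one line ("amenability of a group is equivalent to amenability of its finitely generated subgroups"), and your proposal simply fills in the same chain — uniform measure on a finite symmetric generating set, Theorem \ref{main}, the entropy criterion, and then amenability of each finitely generated subgroup. The only quibble is attribution: the implication you actually use (existence of a non-degenerate measure with trivial boundary implies amenability) is the direction due to Furstenberg, not the Kaimanovich--Vershik--Rosenblatt converse.
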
  

Note that some assumptions  on the subshift $(\Sigma, \tau)$ are needed to ensure that the group $[[\tau]]$ is amenable: a construction due to van Douwen \cite{Douwen} shows that there exist non-minimal subshifts $(\Sigma, \tau)$ such that $[[\tau]]$ contains non-abelian free subgroups, see also \cite[Proposition 3.7.1]{Cornulier:Bourbaki} for a more recent exposition. Van Douwen's construction yields subshifts with positive topological entropy. As far as we know, there is no known example of a Cantor system $(\Sigma, \tau)$ having zero topological entropy, and such that $[[\tau]]$ is non-amenable.

\begin{question}
What are the slowest possible growth rates for the complexity of a subshift $(\Sigma, \tau)$, having the property that $[[\tau]]$ contains non-abelian free subgroups (respectively  is non-amenable, respectively is non-Liouville)?
\end{question}

The upper bound for the entropy in Theorem \ref{main} also provides lower bounds for the return probabilities, and thus upper bounds for the growth of F\o lner sets of finitely generated subgroups of $[[\tau]]$; these estimates are new even when $(\Sigma,\tau)$ is minimal. Recall that the \emph{F\o lner function} of a finitely generated amenable group $G$ equipped with a finite symmetric generating set $S$ is the function $\operatorname{F\o l}_{G,S}:\mathbb{N}\to\mathbb{N}$ given by
\[\operatorname{F\o l}_{G,S}(n)=\min\{ |F|\:: F\subset G \:, \:|\partial_SF|\leq \frac{1}{n}|F|\},\]
where $\partial_S F=\{g\in F\::\:\exists s\in S,\:sg\notin F\}$. In the setting of minimal subshifts, de Cornulier \cite[Question 1]{Cornulier:Bourbaki} raises the question to to estimate the return probabilities and the  F\o lner function of finitely generated subgroups of $[[\tau]]$, and to determine if they depend on the choice of $(\Sigma, \tau)$. The next corollary is a step in this direction. 

\begin{cor}\label{C: return probability}\label{C: Folner}
\begin{enumerate}
\item Under the assumptions of Theorem \ref{main}, for every symmetric and finitely supported probability measure on $[[\tau]]$, there exists $C_1>0$ such that the return probabilities satisfy for every $n \geq 1$ 
\[\mu^{*2n}(e)\geq \frac{1}{C_1}\exp(-C_1\rho(\lceil C_1\sqrt{n\log n}\rceil)\log n).\]

\item Suppose moreover that there exists  $C>0$ and $\alpha\in [1,2)$ such that $\rho(n)\leq Cn^\alpha$. Then for any  finitely generated subgroup $G$ of $[[\tau]]$, every finite symmetric generating set $S$ of $G$ and for every $\varepsilon >0$, there exists  $C_2>0$ such that for every $n\geq 1$
\[\operatorname{F\o l}_{G,S}(n)\leq C_2\exp(C_2 n^{2\alpha/(2-\alpha)+\varepsilon}).\]
\end{enumerate}
\end{cor}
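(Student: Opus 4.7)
The plan is to extract part (1) from Theorem~\ref{main} via a short entropy--return-probability inequality, and to extract part (2) from part (1) via a standard conversion between return-probability decay and F\o lner function growth.

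For part (1), I would use the elementary identity
\[\mu^{*2n}(e)=\sum_g \mu^{*n}(g)\mu^{*n}(g^{-1})=\sum_g \mu^{*n}(g)^2,\]
whose second equality uses the symmetry of $\mu$, together with Jensen's inequality applied to the concave function $\log$: viewing $\mu^{*n}$ as the law and $g\mapsto \mu^{*n}(g)$ as the integrand, one obtains
\[\log\mu^{*2n}(e)\;\geq\;\sum_g \mu^{*n}(g)\log\mu^{*n}(g)\;=\;-H(\mu^{*n}).\]
Substituting the entropy bound from Theorem~\ref{main} yields the claimed lower bound on $\mu^{*2n}(e)$, with a new constant $C_1$.

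For part (2), I would apply the previous step to the uniform probability measure $\mu$ on a finite symmetric generating set $S$ of $G$; this is a symmetric finitely supported measure on $[[\tau]]$, so Theorem~\ref{main} applies, and by construction $\mu^{*n}$ is supported inside $G$. Under the polynomial bound $\rho(n)\leq Cn^\alpha$, the estimate of Theorem~\ref{main} becomes $H(\mu^{*n})\leq C'(n\log n)^{\alpha/2}\log n \leq C''_\varepsilon\, n^{\alpha/2+\varepsilon/2}$ for any fixed $\varepsilon>0$ (absorbing the logarithmic factors), so part~(1) gives $\mu^{*2n}(e)\geq \exp(-C''_\varepsilon n^{\alpha/2+\varepsilon/2})$. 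I would then invoke the classical Coulhon-type theorem (refined by Pittet and Saloff-Coste) which translates a return-probability lower bound $\mu^{*2n}(e)\geq \exp(-cn^\beta)$ with $\beta\in(0,1)$ into a F\o lner-function upper bound $\operatorname{F\o l}_{G,S}(n)\leq \exp(Cn^{2\beta/(1-\beta)})$. Taking $\beta=\alpha/2+\varepsilon/2$, the resulting exponent $2\beta/(1-\beta)=(\alpha+\varepsilon)/(1-\alpha/2-\varepsilon/2)$ tends to $2\alpha/(2-\alpha)$ as $\varepsilon\to 0$, and hence is at most $2\alpha/(2-\alpha)+\varepsilon$ for $\varepsilon$ small enough, giving the claim.

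The only nontrivial external input is the Coulhon--Pittet--Saloff-Coste conversion from return-probability estimates to F\o lner-function estimates, which is the main ``obstacle'' in the sense that it is a nontrivial piece of machinery to invoke; however it is entirely standard, relying on Nash-type inequalities linking the $\ell^2$-decay of convolution powers to the isoperimetric profile of $G$, together with the equivalence, up to explicit exponents, between the isoperimetric profile and the F\o lner function. Everything else reduces to a one-line Jensen-type inequality and a direct application of Theorem~\ref{main}, so the genuine content of the corollary already lies in Theorem~\ref{main}.
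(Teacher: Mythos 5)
Your proof is correct, and part (1) is obtained by a route that differs in a small but genuine way from the paper's. The paper derives the return-probability bound from the internal construction of Theorem \ref{main}: it invokes Point 2 of Fact \ref{quantitative entropy vanishing criterion}, namely $\mu^{*2n}(e)\geq \mu^{*2n}(A_{2n})/|A_{2n}|$ (a consequence of $\mu^{*2n}(g)\leq\mu^{*2n}(e)$, proved via Cauchy--Schwarz), combined with the cardinality estimate $|A_n|\leq(2Kn+1)^{\rho(C_2\sqrt{n\log n})}$ from the proof of the theorem. You instead use only the \emph{statement} of Theorem \ref{main}, via the standard inequality $\mu^{*2n}(e)=\sum_g\mu^{*n}(g)^2\geq e^{-H(\mu^{*n})}$ obtained from Jensen applied to $\log$; this is a clean black-box reduction from the entropy bound and is quantitatively equivalent, since the paper's entropy bound is itself essentially $\log|A_n|$ plus lower-order terms. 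Your treatment of part (2) coincides with the paper's: both pass from the return-probability lower bound $\exp(-cn^{\beta})$ with $\beta=\alpha/2$ (up to logarithmic corrections absorbed into $\varepsilon$) to the F\o lner bound with exponent $2\beta/(1-\beta)=2\alpha/(2-\alpha)$ by citing the Nash-inequality machinery of Coulhon and Pittet--Saloff-Coste, exactly as the paper does (it cites Woess, Corollary 14.5(b), and the Pittet--Saloff-Coste survey). The one point worth making explicit in your write-up is that the Jensen step requires $H(\mu^{*n})<\infty$, which holds here because $\mu$ is finitely supported.
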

Details on the proof will be given in Section \ref{S: proof}, after the proof of Theorem \ref{main}

\begin{remark}
The stronger assumption on $\rho$ in  the second part of Corollary \ref{C: Folner} simplifies the statement but it is not essential: the lower bound on the return probabilities always implies an upper bound on the F\o lner function, see for instance \cite[Corollary 14.5]{Woess:book} or \cite{Pittet-Saloff:survey}. 
\end{remark}

In this context, we shall mention Bartholdi and  Vir\'ag's proof of amenability of the Basilica group \cite{Bartholdi-Virag:amenability}, in which the Liouville property was (implicitly)  used for the first time as a tool, to  prove amenability and  estimate return probabilities. Their ideas have been generalized by several authors \cite{Kaimanovich:munchaussen, Bartholdi-Kaimanovich-Nekrashevych, Brieussel:nonuniformgrowth, Amir-Angel-Virag:linear, Liouvilletrees}, who prove amenability and the Liouville property for several classes of groups acting on rooted trees. The groups that we consider here do not act on rooted trees and we need a different method.

\subsection{Outline of the proof and structure of the paper}
Let us give an outline of the proof of Theorem \ref{main}.
Given any subshift $(\Sigma,\tau)$, and  a finitely generated subgroup $G=\langle S\rangle$ of $[[\tau]]$, the orbital Schreier graph of any non-periodic point $x\in \Sigma$ admits a natural Lipschitz embedding into $\Z$. This embedding is given by $gx\mapsto k_g(x)$, where $k_g$ is the orbit cocycle. Thus, if $g_n$ is the left random walk on $G$,   $k_{g_n}(x)$ performs a random walk on a graph with vertex set $\Z$ and edges connecting integers with bounded difference. This fact,  using some general Gaussian estimates due to  Hebisch and Saloff-Coste \cite{Hebisch-Saloff},  implies that the maximal displacement up to time $n$ of the orbit cocycle $k_{g_n}(x)$ has typical size $\sqrt{n}$ and the tail of its distribution admits a Gaussian upper bound.  The key observation is to deduce from this  that  the cocycle  $k_{g_n}$ at time $n$ is with overwhelming probability  constant on a given {cylinder subset} of $\Sigma$, provided that the  number of letters defining the cylinder is big enough compared to  $\sqrt{n}$. Our assumption on the complexity of $\Sigma$ tells us that there are few cylinders, and implies that $g_n$ belongs with high probability to a finite subset $A_n$ of $G$ which has sub-exponentially growing cardinality. This is equivalent to having zero random walk entropy.\\

\emph{Structure of the paper.} Section 2 consists of the proof of a preliminary fact, Proposition  \ref{estimate max}, which is essentially an application of the Gaussian estimates in \cite{Hebisch-Saloff}. Section 3, which is the core of the paper, contains the proof of Theorem \ref{main}. Section 4 discusses examples of subshifts to which Theorem \ref{main} applies. Finally we recall in the Appendix some well-known properties of entropy.

\subsubsection*{Acknowledgements}
 The question whether  topological full groups provide examples of simple  Liouville groups was raised during  communications with K.\ Juschenko, V.\ Nekrashevych and M.\ de la Salle, to whom  I am also grateful  for several discussions about their recent paper \cite{Juschenko-Nekrashevych-Salle:recurrentgrupoids} and about  topological full groups. I would also like to  thank  K.\ Juschenko for careful reading and for useful comments;  V.\ Nekrashevych for bringing to my attention the group described in Subsection \ref{fibonacci}, which I found enlightening; and  M.\ de la Salle for inviting me at  UMPA in November 2013. 
 In a preliminary version, Theorem \ref{main} was stated assuming minimality of $(\Sigma, \tau)$ and this was not needed in the proof, I thank A. Erschler and K. Juschenko who suggested to eliminate this assumption.
 In addition, I am grateful to V. Berth\'e for pointing out several examples of application of Theorem \ref{main}; to Y.\ de Cornulier  for many useful remarks, to E. Fink and A. Stewart for reading a first version, to L. Saloff-Coste for pointing out \cite{Coulhon-Saloff:lineargrowth}, to R.\ Tessera for an interesting conversation on topological entropy. 
I am  especially grateful to my supervisor A. Erschler for many valuable discussions. Finally, I thank  an anonymous referee for several useful comments  and for suggesting Remark \ref{density non-periodic}. This work is partially  supported by the ERC starting grant GA 257110 ``RaWG''.

\section{Preliminaries}\label{S: preliminaries}

The aim of this section is to prove Proposition \ref{estimate max}, that we will use in next section to analyse the random walk on the Schreier graphs of the action of $G$ on $\Sigma$. These graphs  turn out to be one of the simplest kind of infinite graphs:  they have linear growth and admit a natural Lipschitz embedding into $\Z$. Random walks on graphs of linear growth are very well understood. Upper bounds for the transition probabilities can be deduced from a  general result due to  Hebisch and Saloff-Coste \cite{Hebisch-Saloff}, and  a matching lower bound  holds for graphs of linear growth as it is shown by Coulhon and Saloff-Coste in \cite{Coulhon-Saloff:lineargrowth}. The upper bound will be sufficient to our purpose, we recall it below.

Let $\Gamma$ be a graph of bounded degree. A Markov kernel $p(x,y)$ on the vertex set of $\Gamma$ is said to be \emph{nearest neighbour} if it is symmetric and $p(x,y)=0$ unless $x,y$ are neighbours in $\Gamma$. We make the standing assumption that $p$ is $\delta$-\emph{uniformly elliptic}, i.e. there  exists a uniform constant $\delta>0$ such that
\begin{equation}\label{p bounded below} \forall x,y\text{ which are neighbours in $\Gamma$ } \qquad p(x,y)\geq \delta.\end{equation}

Recall that if $\Gamma$ is infinite and connected and $p$ is $\delta$-uniformly elliptic, by  \cite[Corollary 14.6]{Woess:book} there exists a constant $C_1>0$ such that for every $n\geq 1$
$$\sup_{x,y}p_n(x,y)\leq C_1\frac{1}{\sqrt{n}}.$$
Moreover the constant $C_1$  above only depends on $\delta$. This last sentence follows easily by inspection of the proof of \cite[Corollary 14.6]{Woess:book} after observing that, with the notations defined at \cite[p.\ 39]{Woess:book}, every  non-empty finite set $A$ satisfies $a(\partial A)\geq \delta$, since $\Gamma$ is infinite and connected and thus  $\partial A$ contains at least one edge.

By \cite[Theorem 2.1]{Hebisch-Saloff} the above inequality can be improved to obtain the following.

\begin{prop}[Hebisch and Saloff-Coste, Corollary of Theorem 2.1 in \cite{Hebisch-Saloff}]\label{T: Hebisch-Saloff}
 Let $\Gamma$ be an infinite connected graph, and let $p$ be a symmetric nearest neighbour Markov kernel on $\Gamma$. Suppose also that $p$ is  $\delta$-uniformly elliptic for some $\delta>0$. 
Then there exist positive constants $C_1,D$ such that for every $n\geq 1$ and every $x,y$ vertices of $\Gamma$
\begin{equation}\label{Gaussian estimate}
p_n(x,y)\leq C_1\frac{1}{\sqrt{n}}\exp(-\frac{\operatorname{dist}_\Gamma(x,y)^2}{Dn}),\end{equation}
where $\operatorname{dist}_\Gamma$ is the graph distance on $\Gamma$. The constants $C_1$ and $D$ only depend on the uniform ellipticity constant $\delta$.

\end{prop}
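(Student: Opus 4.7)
The input is the on-diagonal bound $\sup_{x,y} p_n(x,y) \le C_1/\sqrt{n}$ recalled in the paragraph preceding the statement; the task is to upgrade this to the Gaussian off-diagonal estimate, which is the standard output of the Davies perturbation method underlying Theorem~2.1 of \cite{Hebisch-Saloff}. The idea is to conjugate the Markov operator $P$ by an exponential weight $e^{\lambda \psi}$, show that the conjugate has $\ell^2$-operator norm close to $1$ in a quantitative way, and then optimize over $\lambda$.

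Fix vertices $x,y \in \Gamma$, set $d := \operatorname{dist}_\Gamma(x,y)$, and take $\psi(z) := -\operatorname{dist}_\Gamma(z,y)$, which is $1$-Lipschitz. For $\lambda \geq 0$ let $M_\lambda$ denote multiplication by $e^{\lambda \psi}$ and form the conjugated operator $P_\lambda := M_\lambda P M_{-\lambda}$, whose kernel is $P_\lambda(u,v) = e^{\lambda(\psi(u)-\psi(v))} p(u,v)$. The first (and only genuinely analytic) step is to establish
$$\|P_\lambda\|_{\ell^2 \to \ell^2} \;\leq\; e^{c \lambda^2}\qquad \text{for all } 0 \leq \lambda \leq 1,$$
with $c = c(\delta) > 0$. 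This is where one uses the self-adjointness of $P$ (which gives $P_\lambda^* = P_{-\lambda}$, so that $P_\lambda P_\lambda^* = P_\lambda P_{-\lambda}$ has kernel controlled by $p$ times $\cosh(\lambda(\psi(u)-\psi(v))) - 1$) together with the inequality $e^t + e^{-t} - 2 \leq c_0 t^2$ for $|t|\leq 1$ applied to the edge-increment $\lambda(\psi(u)-\psi(v)) \in [-\lambda, \lambda]$. The constant depends only on $\delta$ because $\delta$-uniform ellipticity forces the vertex degree to be at most $1/\delta$, so no other geometric data enters.

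The second step combines this with the on-diagonal bound. Since $P_\lambda^n = M_\lambda P^n M_{-\lambda}$,
$$p_n(x,y) \;=\; e^{-\lambda d}\,\langle \delta_y,\, P_\lambda^n \delta_x \rangle.$$
Split $P_\lambda^n = P_\lambda^{\lfloor n/2 \rfloor} P_\lambda^{\lceil n/2 \rceil}$ and write the on-diagonal bound in its ultracontractive form $\|P^{n/2}\|_{\ell^2 \to \ell^\infty} \leq C_1^{1/2} n^{-1/4}$. The same ultracontractive bound transfers to the weighted semigroup at the cost of a factor $e^{c n \lambda^2 / 2}$, by interpolating the step-one estimate with the unweighted bound (equivalently, by applying Cauchy--Schwarz to the middle step and absorbing the weights into the $P_\lambda^{n/2}$ pieces). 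This yields
$$p_n(x,y) \;\leq\; C' n^{-1/2} \exp\bigl( c n \lambda^2 - \lambda d \bigr).$$
The exponent is minimized at $\lambda = d/(2 c n)$, giving value $-d^2/(4 c n)$; this choice lies in $[0,1]$ whenever $d \leq 2 c n$, while the complementary range $2 c n < d \leq n$ is handled by taking $\lambda = 1$ (and $d > n$ is trivial since then $p_n(x,y) = 0$). The bound of the proposition follows with $D = 4c$.

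The main obstacle is the step-one estimate $\|P_\lambda\|_{\ell^2 \to \ell^2} \leq e^{c \lambda^2}$ with $c$ depending only on $\delta$: everything else is bookkeeping. In particular, one must track constants carefully to confirm they depend on $\delta$ alone; this is automatic once one uses the degree bound $\deg \leq 1/\delta$.
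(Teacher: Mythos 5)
You should first note the point of comparison: the paper does not prove this proposition at all. It is quoted as a corollary of Theorem~2.1 of Hebisch and Saloff-Coste, fed with the on-diagonal bound $\sup_{x,y}p_n(x,y)\le C_1 n^{-1/2}$ recalled from Woess (whose constant depends only on $\delta$); the only "proof" in the paper is the observation that the constants can be tracked through those two citations. So you are attempting to reprove the cited black box. Your overall architecture --- conjugate by $e^{\lambda\psi}$, combine a weighted $\ell^2\to\ell^2$ bound with ultracontractivity, optimize over $\lambda\in[0,1]$, and dispose of $\operatorname{dist}_\Gamma(x,y)>n$ trivially --- is indeed the Davies-type scheme underlying that theorem, and your endgame (the choice $\lambda=d/(2cn)$, the complementary range, the final constant $D$) is fine.

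The gap is in the step you yourself single out as the only analytic one: $\|P_\lambda\|_{\ell^2\to\ell^2}\le e^{c\lambda^2}$ for $0\le\lambda\le1$. The justification offered does not establish it. The kernel of $P_\lambda P_\lambda^*=P_\lambda P_{-\lambda}$ is $\sum_w e^{\lambda(\psi(u)+\psi(v)-2\psi(w))}p(u,w)p(w,v)$: the exponent involves the intermediate vertex $w$, not the single-edge increment $\psi(u)-\psi(v)$, so it is not ``controlled by $p$ times $\cosh(\lambda(\psi(u)-\psi(v)))-1$''. Applying Schur's test to this kernel (or to $P_\lambda$ directly) yields only $\|P_\lambda\|\le e^{\lambda}$, because the first-order term in $\lambda$ --- the local drift of $\psi$ under one step of the walk --- does not cancel pointwise. (The symmetrized operator $\tfrac12(P_\lambda+P_{-\lambda})$ does have kernel $p(u,v)\cosh(\lambda(\psi(u)-\psi(v)))$ and norm $\le 1+c_0\lambda^2$, but that does not bound $\|P_\lambda\|$, since the antisymmetric part contributes at order $\lambda$.) Your claim that the constant depends on $\delta$ only through the degree bound is also untenable: for the symmetric kernel on $\Z$ with $p(2k,2k+1)=p(2k+1,2k)=1-\delta$ and $p(2k+1,2k+2)=p(2k+2,2k+1)=\delta$, and $\psi(z)=z$, one has $\|P_\lambda\delta_0\|\ge(1-\delta)e^{\lambda}$, which forces any admissible $c$ to be at least of order $1/\delta$; hence no argument using ellipticity only via $\deg\le 1/\delta$ can produce the estimate. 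This is exactly the well-known obstruction to running Davies' method in discrete time, and overcoming it is the substantive content of Hebisch--Saloff-Coste's Theorem~2.1, whose actual proof is organized differently (an inductive scheme on the weighted norms $\|e^{\lambda\psi}P^n\delta_x\|_2$ exploiting the symmetry of $p$ globally rather than edge by edge). As written, your argument reduces the proposition to an unproved assertion carrying essentially all of its difficulty; you should either prove the step-one estimate honestly, with the $\delta$-dependence tracked, or simply cite the theorem as the paper does.
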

\begin{remark}\label{R: Gaussian estimates}
Let $d$ be another distance on  $\Gamma$
and suppose that there is $K>0$ such that $d\leq K\operatorname{dist}_\Gamma$. Then the same estimate holds if $\operatorname{dist}_\Gamma$ is replaced by $d$, with possibly different constants $C_1, D$, where $C_1$ only depends on $\delta$ and $D$ depends on $\delta$ and on the Lipschitz constant $K$.
\end{remark}

Until the end of the section, we assume to be in the following setting. 

\begin{setting}\label{assumption section 2}
Let $K>0$ and let $\Gamma$ be a  graph which is $K$-Lipschitz embedded into $\Z$. In other words, the vertex set of $\Gamma$ is identified with a subset of $\Z$, and whenever $x,y\in \Z$ are the endpoints of an edge of $\Gamma$ we have $|x-y|\leq K$. Suppose also that 0 belongs to the vertex set of $\Gamma$. We shall consider two distances on $\Gamma$, the graph distance and the distance induced by  $\Z$.
Suppose that $\Gamma$ is endowed with  a symmetric nearest neighbour Markov kernel $p$ that is $\delta$-uniformly elliptic. Let $(X_n)_{n\in \N}$ be a Markov chain  with kernel $p$ started at 0. We wish to study 
\[\max_{j\leq n}|X_j|,\]
where $|\cdot|$ is the absolute value of $\Z$.
\end{setting}

The following Proposition will be used  in next section.
\begin{prop}\label{estimate max}
Let $\Gamma$ be a graph $K$-Lipschitz embedded into $\Z$,  endowed with a symmetric nearest neighbour Markov kernel $p$ which is $\delta$-uniformly elliptic for some $\delta>0$.  There exists positive real constants $C,D, a_0$ such that for every $a\geq a_0$ and every $n\geq 1$
\[\P(\max_{j\leq n}|X_j|\geq a\sqrt{n})\leq C \exp({-\frac{(a-a_0)^2}{D}}),\]
where $X_n$ is the Markov chain with kernel  $p$  started at 0, and $|\cdot|$ is the absolute value of $\Z$. The constants $C,D,a_0$ only depend on the Lipschitz constant $K$ and on the constant $\delta$.
\end{prop}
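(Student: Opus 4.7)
The plan is to apply the Hebisch--Saloff-Coste Gaussian heat-kernel bound (Proposition \ref{T: Hebisch-Saloff}), with the graph distance replaced by the $\mathbb{Z}$-embedded absolute value via Remark \ref{R: Gaussian estimates} (using the Lipschitz constant $K$), and then to upgrade the resulting fixed-time tail estimate to a maximal-in-time estimate by a standard strong-Markov argument exploiting reversibility of $p$.

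The first step is a \emph{fixed-time tail bound}: for any vertex $y$, any integer $m \leq 2n$, and any $b > 0$,
\[
\P_y\bigl(|X_m - y| \geq b\sqrt{n}\bigr) \;\leq\; C' \exp(-b^2/D'),
\]
with $C',D'$ depending only on $\delta$ and $K$. Indeed, summing the bound $p_m(y,z) \leq C_1 m^{-1/2} \exp(-|z-y|^2/(Dm))$ over integers $z$ with $|z-y| \geq b\sqrt{n}$, a standard Gaussian tail estimate (using e.g.\ $r^2 \geq r^2/2 + b^2 n/2$ for $r \geq b\sqrt{n}$) produces a factor $\lesssim \sqrt{m}\,\exp(-b^2 n/(2Dm))$, which cancels the $m^{-1/2}$, and the constraint $m \leq 2n$ then gives the claimed uniform bound.

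The second step is the \emph{maximal inequality}. Let $\tau = \inf\{j \geq 0 : |X_j| \geq a\sqrt{n}\}$, and choose $b \geq 1$ large enough that the bound in the previous step is $\leq 1/2$. For every $y$ with $|y| \geq a\sqrt{n}$ and every $m \in [n, 2n]$, the triangle inequality gives $\P_y(|X_m| \geq (a-b)\sqrt{n}) \geq 1/2$. Applying the strong Markov property at time $\tau$ and summing over $\{\tau = j\}$ for $j \leq n$ yields
\[
\P\bigl(|X_{2n}| \geq (a-b)\sqrt{n}\bigr) \;\geq\; \tfrac{1}{2}\,\P(\tau \leq n).
\]
A final application of the fixed-time bound with $y=0$ and $m=2n$ gives $\P(|X_{2n}| \geq (a-b)\sqrt{n}) \leq C'\exp(-(a-b)^2/D')$. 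Setting $a_0 = b$ and relabelling constants concludes the proof.

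The main technical point is the uniformity in $m \leq 2n$ of the fixed-time tail estimate: the unfavourable factor $m^{-1/2}$ in the heat kernel is exactly compensated by the $\sqrt{m}$ arising from the Gaussian tail summation on $[b\sqrt{n},\infty)$. The rest is a routine reversibility/maximal-inequality argument, and one can verify along the way that all constants depend only on $\delta$ and $K$, as required.
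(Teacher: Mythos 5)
Your argument follows essentially the same route as the paper: a fixed-time Gaussian tail bound obtained by summing the Hebisch--Saloff-Coste heat-kernel estimate (transported to the $\Z$-distance via the Lipschitz embedding, exactly as in Remark \ref{R: Gaussian estimates}) over the vertices, followed by a stopping-time/Markov-property maximal inequality, which is the paper's Lemma \ref{reflection principle}. The only cosmetic difference is that you compare the stopped walk to its position at time $2n$ rather than at time $n$; this changes nothing of substance, and your bookkeeping of the constants (dependence only on $\delta$ and $K$, with the uniformity in $m\leq 2n$ coming from the cancellation of $m^{-1/2}$ against the $\sqrt{m}$ from the Gaussian summation) is correct.

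There is, however, one genuine omission. Proposition \ref{T: Hebisch-Saloff} is stated for \emph{infinite} connected graphs, whereas Proposition \ref{estimate max} makes no such assumption --- and in the intended application the graph is the Schreier graph of a $G$-orbit, which may well be finite. Your first step invokes the heat-kernel bound without verifying this hypothesis, so as written the proof only covers the infinite case (connectivity is harmless, since one may restrict to the component of $0$, but finiteness is not). The paper closes this gap with a short reduction: if the vertex set of $\Gamma$ is contained in $[-\lceil a\sqrt{n}\rceil, \lceil a\sqrt{n}\rceil]$ the probability in question vanishes; otherwise one modifies $\Gamma$ outside that interval into an infinite graph with the same Lipschitz constant, carrying a $\delta/2$-uniformly elliptic kernel that agrees with $p$ on all edges inside the interval, and couples the two walks up to the exit time from the interval. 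You should either add such a reduction or explicitly restrict your statement to infinite graphs and deal with the finite case where the proposition is applied.
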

 We stress that the uniform control on the constants in Proposition \ref{estimate max} is crucial for the application that we need.

 The proof relies on Proposition \ref{T: Hebisch-Saloff} and on the following modification of the classical ``reflection principle'' for the random walk on $\Z$. The argument of the proof is standard.
\begin{lemma}\label{reflection principle}
With the same assumptions as in Proposition \ref{estimate max}, suppose moreover that $\Gamma$ is infinite. Then there exists a constant $b_0>0$, only depending on $K, \delta$, such that for every $x>0$ and every $n\in\N$
$$\mathbb{P}(\max_{j\leq n}|X_j|\geq x)\leq 2\mathbb{P}(|X_n|\geq x-
b_0\sqrt{n}).$$

\end{lemma}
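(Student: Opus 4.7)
The approach is the classical reflection principle adapted to the absence of exact symmetry, using the strong Markov property at the first passage time combined with a uniform diffusive bound coming from Proposition \ref{T: Hebisch-Saloff}.

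Set $T=\inf\{j\geq 0:|X_j|\geq x\}$, so that $\{\max_{j\leq n}|X_j|\geq x\}=\{T\leq n\}$. The plan is to split
\[
\mathbb{P}(T\leq n)=\mathbb{P}\bigl(T\leq n,\ |X_n|\geq x-b_0\sqrt{n}\bigr)+\mathbb{P}\bigl(T\leq n,\ |X_n|<x-b_0\sqrt{n}\bigr).
\]
The first summand is trivially bounded by $\mathbb{P}(|X_n|\geq x-b_0\sqrt{n})$, which after multiplication by $2$ is exactly the right-hand side of the lemma. The whole game is therefore to show the second summand is at most $\tfrac12\mathbb{P}(T\leq n)$, so that the inequality $\mathbb{P}(T\leq n)\leq\mathbb{P}(|X_n|\geq x-b_0\sqrt{n})+\tfrac12\mathbb{P}(T\leq n)$ can be solved.

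To control the second summand, I would apply the strong Markov property at $T$. On $\{T=m\}\cap\{X_T=y\}$ we have $|y|\geq x$, hence by the reverse triangle inequality in $\Z$, $|X_n|\geq x-|X_n-y|$, so $\{|X_n|<x-b_0\sqrt{n}\}\subset\{|X_n-y|>b_0\sqrt{n}\}$. Conditioning on $T=m$ and $X_T=y$ and using homogeneity of the kernel, I would get
\[
\mathbb{P}\bigl(T\leq n,\ |X_n|<x-b_0\sqrt{n}\bigr)\leq \mathbb{P}(T\leq n)\cdot \sup_{y\in\Gamma,\ m\leq n}\mathbb{P}_y\bigl(|X_m-y|>b_0\sqrt{n}\bigr),
\]
and the task reduces to choosing $b_0=b_0(K,\delta)$ so that this supremum is at most $1/2$.

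The uniform diffusive bound is the crux, and it is where Proposition \ref{T: Hebisch-Saloff} and Remark \ref{R: Gaussian estimates} enter: because the embedding is $K$-Lipschitz, the $\Z$-distance $|x-y|$ is dominated by $K\operatorname{dist}_\Gamma(x,y)$, so the remark furnishes constants $C_1,D$ depending only on $K$ and $\delta$ with
\[
p_m(y,z)\leq C_1 m^{-1/2}\exp\!\left(-\frac{|y-z|^2}{Dm}\right).
\]
Since at most one vertex of $\Gamma$ sits at each integer, summing over integers with $|z-y|>b_0\sqrt{n}$ and comparing with a Gaussian integral bounds $\mathbb{P}_y(|X_m-y|>b_0\sqrt{n})$, uniformly in $m\leq n$, by a quantity of the form $C'\operatorname{erfc}(b_0/\sqrt{D})$, which can be made smaller than $1/2$ by taking $b_0$ large enough in terms of $K,\delta$ alone. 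Plugging this into the decomposition yields the desired inequality.

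The main obstacle is precisely this last uniform bound. The delicate point is that the estimate must hold for all base points $y$ and all time scales $m\leq n$ simultaneously, and the resulting $b_0$ must depend only on $K$ and $\delta$; this is exactly the kind of uniformity that Proposition \ref{T: Hebisch-Saloff} is engineered to provide. An alternative route via a martingale/Chebyshev argument is tempting but seems to require some control on the drift of the coordinate $X_j\in\Z$, which the mere symmetry of $p$ does not directly supply, so it is cleaner to extract everything from the heat-kernel Gaussian bound.
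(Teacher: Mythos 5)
Your proposal is correct and follows essentially the same route as the paper: the first-passage decomposition at $T$, the reverse triangle inequality reducing the bad event to $|X_n-X_T|>b_0\sqrt{n}$, the strong Markov property, and the choice of $b_0$ via the Hebisch--Saloff-Coste Gaussian bound summed against the integer embedding so that the overshoot probability is at most $1/2$ uniformly in the base point and the remaining time. No gaps.
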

\begin{proof} 

Since we assume that the graph is infinite, Proposition \ref{T: Hebisch-Saloff} applies. A straightforward computation using (\ref{Gaussian estimate}) with respect to the distance of $\Z$ (see Remark \ref{R: Gaussian estimates}) shows  that for every $n\geq 1$ and $b\geq 2$ (to ensure that $b-1/\sqrt{n}\geq 1$) we have
\begin{equation}\label{Gaussian tail}
\P(|X_n|\geq b\sqrt{n})\leq 2C_1\int_{b-\frac{1}{\sqrt{n}}}^\infty e^{-\frac{t^2}{D}}dt\leq C_2e^{-\frac{(b-1/\sqrt{n})^2}{D}}\leq C_2e^{-\frac{(b-1)^2}{D}}, \end{equation}
where $C_1, D$ are the constants from Proposition \ref{T: Hebisch-Saloff}, and $C_2=C_1D$.
For $y\in \Z$, write $\P_y$ for the law of $(X_n)_{n\in \N}$ started at $X_0=y$, while $\P$ denotes $\P_0$.
Obviously (\ref{Gaussian tail}) holds unchanged if $\P$ is replaced by $\P_y$ and $|X_n|$ by $|X_n-y|$. In particular there is  $b_0>0$, that only depends on $K,\delta$, such that for every $y\in\Z$ and every $n\in \N$ we have 
\begin{equation}\mathbb{P}_{y}(|X_n-y|>b_0\sqrt{n})\leq \frac{1}{2},\label{choice of b_0}\end{equation} this will be  $b_0$ in the  statement. 

 Set $S_n=\max_{j\leq n}|X_j|$. Consider the stopping time $T_x=\inf\{n\geq 0:\: |X_n|\geq x\}$, and observe that the event $\{S_n\geq x\}$ is equal to $\{T_x\leq n\}$. We have
\begin{align*}\P(S_n\geq x)=\P(T_x\leq n)&=\\
\P(T_x\leq n,\: |X_n|\geq x-b_0\sqrt{n})&+\P(T_x\leq n, \: |X_n|< x-b_0\sqrt{n})\leq\\
\P(|X_n|\geq x-b_0\sqrt{n})&+\P(T_x\leq n, \: |X_n-X_{T_x}|>b_0\sqrt{n}).\end{align*}
To bound the second summand, write
\begin{align*}\P(T_x\leq n,\:|X_n-X_{T_x}|> b_0\sqrt{n})=\sum_{j\leq n}\P(T_x=j, |X_{n}-X_j|> b_0\sqrt{n})=\\
\sum_{j\leq n}\sum_{y\in\Z}\P(T_x=j,\:X_j=y)\P_y(|X_{n-j}-y|>b_0\sqrt{n})\leq\\
 \frac{1}{2}\sum_{j\leq n}\sum_{y\in\Z}\P(T_x=j,\: X_j=y)=
\frac{1}{2}\P(T_x \leq n)=\frac{1}{2}\P(S_n\geq x),\end{align*}
where equality between the first and second line follows from Markov property,  and inequality between the second and third line holds since we chose  $b_0$ verifying (\ref{choice of b_0}). These two computations together imply that
\[\P(S_n\geq x)\leq \P(X_n\geq x-b_0\sqrt{n})+\frac{1}{2}\P(S_n\geq x),\]
which is a rephrasing of the desired inequality. \qedhere

\end{proof}

\begin{remark}
 Lemma \ref{reflection principle} also holds without the assumption that $\Gamma$ is infinite. Here is a way to see this: first prove Proposition \ref{estimate max}, that does not assume that the graph is infinite (in the proof, we will only need the current form of Lemma \ref{reflection principle}), then apply it to show that also in the finite case there exists  $b_0>0$, that only depends on $K$ and $\delta$, verifying (\ref{choice of b_0}); the rest of the proof holds with no change. 

\end{remark}

\begin{proof}[Proof of Proposition \ref{estimate max}]
Suppose at first that $\Gamma$ is infinite. Then Proposition \ref{estimate max} follows from Lemma \ref{reflection principle} (setting $x=a\sqrt{n})$ and from the inequality (\ref{Gaussian tail}) applied to $b=a-b_0$. The constants $C, D, a_0$ are $C=2C_2=2C_1D$, where $C_2$ is as in (\ref{Gaussian tail}), $C_1, D$ are the constants from Proposition \ref{T: Hebisch-Saloff}, and $a_0=b_0+1$, where $b_0$ is the constant from Lemma \ref{reflection principle}. All these constants only depend on the Lipschitz constant $K$ and on $\delta$.

The case of finite $\Gamma$  is readily reduced to the infinite case as follows. Fix $a$ and $n$. If the vertex set of $\Gamma$ is contained in the interval $[-\lceil a\sqrt{n}\rceil,+\lceil a\sqrt{n}\rceil]\subset \Z$ then $\P(S_n> a\sqrt{n})=0$ and the claim is correct. Otherwise, modify $\Gamma$ outside that interval to obtain an infinite graph $\tilde{\Gamma}$ with the same Lipschitz constant $K$, and a Markov kernel $\tilde{p}$ which is $\delta/2$-uniformly elliptic and coincides with $p$ on edges that entirely lie in that interval (this can clearly be achieved). Random walks on $(\Gamma, p)$ and $(\tilde{\Gamma}, \tilde{p})$ are naturally coupled until the exit time from $[-\lceil a\sqrt{n}\rceil,+\lceil a\sqrt{n}\rceil]$, in particular the distributions of the exit times for the two random walks are the same. The conclusion follows from the infinite case. \qedhere

\end{proof}
\section{Proof of Theorem \ref{main}}
\label{S: proof}
We now turn to the proofs of Theorem \ref{main} and of Corollary \ref{C: return probability}.\\

\emph{Throughout the section, we suppose that $(\Sigma, \tau)$ is a subshift without isolated periodic points, and that $\mu$ is a symmetric, finitely supported probability measure on $[[\tau]]$ (possibly degenerate).\\ 
We set $S=\supp\mu$ and $G=\langle S\rangle\le [[\tau]]$. }\\

For the moment we do not make any assumption on the complexity of $\Sigma$. The assumption on the complexity in Theorem \ref{main}  will  be used in the last part of the proof, as we will point out.

\begin{remark}\label{density non-periodic}
The set of non-periodic points is dense in $\Sigma$. To see this, observe that absence of isolated periodic points implies that, for any $n\in\N$, the finite set of $n$-periodic points has empty interior. By Baire's Theorem the set of all periodic points has empty interior, in other words non-periodic points are dense.
\end{remark}

\begin{defin}\label{orbit cocycle}
By  definition of topological full group, for every element $g\in [[\tau]]$ there exists a continuous,  locally constant function $k_g:\Sigma\to \Z$, called the \emph{orbit cocycle}, defined by the requirement
\[\forall x\in \Sigma, \qquad g(x)=\tau^{k_g(x)}(x).\]
  Note that $g$ is uniquely determined by $k_g$. Conversely, observe that the value of $k_g(x)$ is uniquely determined by $g$ if $x$ is a non-periodic point. By Remark \ref{density non-periodic}, non-periodic points are dense, so by continuity the function $k_g$ is uniquely determined by $g$ everywhere.  By compactness of $\Sigma$, $k_g$ takes finitely many values for every fixed $g\in [[\tau]]$ . We set
 \begin{equation} \label{Lipschitz constant} K=\max_{s\in S}\max_{x\in \Sigma} |k_s(x)|.\end{equation} \end{defin}
Note  that  the orbit cocycle verifies the cocycle rule
 \begin{equation}\label{cocycle} k_{gh}(x)=k_g(hx)+k_h(x).\end{equation}

  \begin{remark}\label{length bound}
 If $l_S$ is the word length on $G$ defined by $S$, the cocycle rule implies that  $|k_g(\cdot)|\leq Kl_S(g)$ point-wise.  
\end{remark}
 
 \begin{defin}
We fix the following notation. Given an integer $l>0$ and a finite word $w=w_{-l}\cdots w_{-1} w_0w_1\cdots w_l$ of length $2l+1$ in the alphabet $\A$, we denote by $\mathcal{C}_w$ the cylinder subset
 \[\mathcal{C}_w=\{x=\cdots x_{-1}.x_0x_1\cdots\in \Sigma\::\: x_i=w_i, \:\: \forall i=-l,\ldots l\}\subset \Sigma.\]
In what follows, the word \emph{cylinder} will always refer to a subset of $\Sigma$ of this form. The integer $l$ is called its \emph{depth}.
\end{defin}

We start with an elementary Lemma, which provides a criterion to ensure that $k_g$ is constant  cylinders. 
\begin{lemma}\label{coupling}

There exists $l_0\in\N$, which only depends on $(\Sigma,\tau)$ and on $S$, such that the following holds.

Let $n>0$ and $l>l_0$ be integers.  Let $h_1,\ldots, h_n\in S$ be any $n$-tuple of elements in the generating set,  and set $g_j=h_j\cdots h_1$ for every $j=1,\ldots, n$. Let $\mathcal{C}_w$ be a non-empty cylinder of depth $l$. Choose any non-periodic point  $x\in \mathcal{C}_w$ (which exists by Remark \ref{density non-periodic}) and suppose that 

\begin{equation}\label{max<l-l_0}\max_{j\leq n} |k_{g_j}(x)|\leq l-l_0.\end{equation}
 Then for every $j=1,\ldots,n$, the restriction of the orbit cocycle $k_{g_j}$ to $\mathcal{C}_w$ is constant. 
 \end{lemma}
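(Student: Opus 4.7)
The plan is to take $l_0$ to be the maximum, over $s \in S$, of the ``radius of dependence'' of the orbit cocycle $k_s$. Since $\Sigma$ is compact and totally disconnected, each continuous function $k_s \colon \Sigma \to \Z$ is locally constant with finite image, and the cylinders of depth $r$ form a neighbourhood basis at each point. A standard compactness argument then produces an integer $r_s$ such that $k_s(x)$ depends only on the coordinates $x_{-r_s}, \ldots, x_{r_s}$ of $x$. I would set $l_0 := \max_{s \in S} r_s$, which depends only on $(\Sigma, \tau)$ and on $S$, as required.

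The conclusion is then proved by induction on $j$, with a trivial base case at $g_0 = e$. For the inductive step, suppose $k_{g_{j-1}}$ is constant on $\mathcal{C}_w$ and set $m := k_{g_{j-1}}(x)$. Given any $y \in \mathcal{C}_w$, the cocycle rule (\ref{cocycle}) together with the induction hypothesis gives
\[
k_{g_j}(y) - k_{g_j}(x) \;=\; k_{h_j}(g_{j-1} y) - k_{h_j}(g_{j-1} x) \;=\; k_{h_j}(\tau^m y) - k_{h_j}(\tau^m x),
\]
where the second equality uses $g_{j-1} z = \tau^{k_{g_{j-1}}(z)}(z)$ and the fact that $k_{g_{j-1}}(y) = k_{g_{j-1}}(x) = m$. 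Since $x$ and $y$ agree on coordinates $[-l, l]$, the sequences $\tau^m y$ and $\tau^m x$ agree on the shifted window $[-l-m,\; l-m]$. Hypothesis (\ref{max<l-l_0}) applied at index $j-1$ gives $|m| \leq l - l_0 \leq l - r_{h_j}$, so this window contains $[-r_{h_j}, r_{h_j}]$; hence $k_{h_j}(\tau^m y) = k_{h_j}(\tau^m x)$, and so $k_{g_j}(y) = k_{g_j}(x)$. This closes the induction.

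I do not anticipate a serious obstacle here: the lemma is essentially a bookkeeping exercise, and the only conceptual point is to identify $l_0$ with the coordinate window governing the generating cocycles, after which the induction proceeds formally from the covariance of the shift. The non-periodicity assumption on $x$ plays only a minor role, guaranteeing that $k_{g_j}(x)$ is unambiguous at the base point; at the other (possibly periodic) points of $\mathcal{C}_w$, the argument uses the continuous extension of the cocycle provided by Definition \ref{orbit cocycle}.
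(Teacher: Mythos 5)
Your proof is correct and follows essentially the same route as the paper: the same choice of $l_0$ as the coordinate window governing the generators' cocycles, followed by the same induction on $j$ using the cocycle rule and the shift of the agreement window by $k_{g_{j-1}}(x)$. The only cosmetic difference is that the paper first runs the induction over non-periodic $y\in\mathcal{C}_w$ and then extends to all of $\mathcal{C}_w$ by density and continuity, whereas you apply the (everywhere-valid) cocycle identity to arbitrary $y$ directly; both are fine.
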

\begin{proof}
The function $\Sigma\to\Z^S$ given by $x\mapsto (k_s(x))_{x\in S}$ is locally constant and takes finitely many values. Thus, the level sets of this function provide a finite partition $\mathcal{P}$ of $\Sigma$ into clopen sets such that for every generator $s\in S$ the restriction of  $k_s$ to every element of $\mathcal{P}$ is constant. 
After taking a refinement, we may suppose that $\mathcal{P}$ consists of cylinders. Let $l_0$ be the largest depth  of a cylinder in $\mathcal{P}$, this will be $l_0$ in the statement. In other words, if $x=\cdots x_{-1}.x_0x_1\cdots\in \Sigma$ and $s\in S$, in order to determine $k_s(x)$  it is sufficient to know the letters $x_{-l_0}, x_{-l_0+1},\ldots, x_0,\ldots,x_{l_0}$.

Now suppose that (\ref{max<l-l_0}) holds and let $y\in \mathcal{C}_w$. For clarity, suppose at first that $y$ is non-periodic. Let us prove by induction on $j\leq n$ that $k_{g_j}(x)=k_{g_j}(y)$. For $j=1$ observe that since $x,y$ are in a same cylinder of depth $l>l_0$, they lie in the same element of $\mathcal{P}$. Since $g_1=h_1\in S$, this implies that $k_{g_1}(x)=k_{g_1}(y)$.

 Suppose that the conclusion holds for $j$ and that $g_{j+1}=h_{j+1}g_j$ with $h_{j+1}\in S$. 

First note that if $x=\cdots x_{-1}.x_0x_1\cdots$ and $y=\cdots y_{-1}.y_0y_1\cdots$ are in $\mathcal{C}_w$, by the inductive hypothesis we have
\begin{align*}
g_jx=\tau^{k_{g_j}(x)}(x)=&\cdots x_{k_{g_j}(x)-1}.x_{k_{g_j(x)}}x_{k_{g_j}(x)+1}\cdots,\\
g_jy=\tau^{k_{g_j}(y)}(y)=\tau^{k_{g_j}(x)}(y)=&\cdots y_{k_{g_j}(x)-1}.y_{k_{g_j(x)}}y_{k_{g_j}(x)+1}\cdots.
\end{align*}
Since $x$ and $y$ agree on letters at distance at most $l$ from the letter at position 0 and by  the assumption (\ref{max<l-l_0}) we have $|k_{g_j}(x)|\leq l-l_0$,  we conclude that the sequences $g_jx$ and $g_jy$ agree on letters at distance at most $l_0$ from the letter at position 0. Hence they lie in the same element of $\mathcal{P}$. Since $h_{j+1}\in S$, it follows that $k_{h_{j+1}}(g_jx)=k_{h_{j+1}}(g_jy)$.
Thus, using the cocycle rule (\ref{cocycle}) and again the inductive hypothesis
$$k_{g_{j+1}}(x)=k_{h_{j+1}}(g_jx)+k_{g_j}(x)=k_{h_{j+1}}(g_jy)+k_{g_j}(y)=k_{g_{j+1}}(y),$$
which completes the induction. We have proven that for every $j\leq n$ the restriction of $k_{g_j}$ to the set of non-periodic points in  $\mathcal{C}_w$ is constant. Since this set is dense by Remark \ref{density non-periodic}, this implies that the restriction of $k_{g_j}$ to $\mathcal{C}_w$ is constant.
\qedhere

\end{proof}

\emph{From now on let $(g_n)_{n\in\N}$ be the left random walk on $G$ driven by $\mu$, i.e. $g_n=h_n\cdots h_1$ where $(h_i)_{i\geq 1}$ is a sequence of independent $G$-valued random variables, with distribution $\mu$.}\\

We first look at the process $(k_{g_n}(x))_{n\in\N}$ for a fixed $x\in \Sigma$. 

\begin{remark}
Let $x\in \Sigma$ be a non-periodic point, and let $O(x)=\{\tau^j(x)\}_{j\in\Z}$ be the $\tau$-orbit of $x$. Since $x$ is not periodic, $O(x)$ can be identified with $\Z$ via the map
\begin{align*}
\iota_x:O(x)&\to \Z\\
\tau^j(x)&\mapsto j.\end{align*}
Let $Gx\subset O(x)$ be the $G$-orbit of $x$ and let $\Gamma_x$ be the corresponding \emph{Schreier graph} with respect to the generating set $S$, that is, the undirected graph with vertex set $Gx$ and where $y,z\in Gx$ are connected by an edge if there exists $s\in S$ such that $y=sz$. It is straightforward to check that the restriction of $\iota_x$ to  $\Gamma_x$ is $K$-Lipschitz for the constant $K$ defined in (\ref{Lipschitz constant}). Identify the vertex set of $\Gamma_x$ with a subset of $\Z$ using the map $\iota_x$.
This identification  sends $gx\in Gx$ to $k_g(x)\in\Z$. It follows that the process $(k_{g_n}(x))_{n\in\N}$ is the position in $\Z$ of   $(g_nx)_{n\in \N}$, which is in turn a nearest neighbour random walk on the graph $\Gamma_x$, with Markov kernel $p(y,z)=\sum_{s:sy=z}\mu(s)$. 

Finally, note that the Markov kernel  $p$ on $\Gamma_x$  is $\delta$-uniformly elliptic (see (\ref{p bounded below})) with  $\delta=\min_{s\in S}\mu(s)$.  Note also that both constants $K$ and $\delta$ are independent from the choice of $x$. We are in  position to apply Proposition \ref{estimate max} to $\Gamma=\Gamma_x$ and  $X_n=k_{g_n}(x)$, and we have obtained the following Lemma.
\end{remark}

\begin{lemma}\label{max k}
There exist positive constants $C_1,D, a_0$ such that for every $a\geq a_0$ every $n\geq 1$ and every non-periodic $x\in \Sigma$ we have
$$\P(\max_{j\leq n} |k_{g_j}(x)|\geq a\sqrt{n})\leq C_1 \exp({-\frac{(a-a_0)^2}{D}}).$$
 The constants $C_1,D,a_0$ are independent from the choice of $x$.
\end{lemma}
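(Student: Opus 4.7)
The plan is to invoke Proposition \ref{estimate max} directly, using the identification carried out in the remark just before the lemma. The bulk of the work has already been done there; the only subtlety is checking that the constants produced by Proposition \ref{estimate max} can be chosen independently of the non-periodic point $x$.

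First I would fix a non-periodic point $x \in \Sigma$ and invoke the remark preceding the lemma: the orbit $O(x) = \{\tau^j(x)\}_{j \in \Z}$ is in bijection with $\Z$ via $\iota_x$, the Schreier graph $\Gamma_x$ of the $G$-orbit $Gx \subset O(x)$ is thereby $K$-Lipschitz embedded into $\Z$ with $K = \max_{s \in S}\max_y |k_s(y)|$ as in (\ref{Lipschitz constant}), and the process $(k_{g_n}(x))_{n \in \N}$ coincides with the $\iota_x$-image of the random walk $(g_nx)_{n \in \N}$ driven by $\mu$ on $\Gamma_x$. The corresponding Markov kernel $p(y,z) = \sum_{s : sy = z} \mu(s)$ is symmetric (since $\mu$ is symmetric), nearest-neighbor on $\Gamma_x$, and $\delta$-uniformly elliptic with $\delta = \min_{s \in S}\mu(s) > 0$.

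Next I would apply Proposition \ref{estimate max} to the graph $\Gamma = \Gamma_x$, the kernel $p$ above, and the Markov chain $X_n = k_{g_n}(x)$, which is indeed started at $0 = k_e(x)$. This immediately yields positive constants $C_1, D, a_0$ such that
\[\P\bigl(\max_{j \le n}|k_{g_j}(x)| \ge a\sqrt{n}\bigr) \le C_1 \exp\!\left(-\frac{(a-a_0)^2}{D}\right)\]
for all $a \ge a_0$ and all $n \ge 1$.

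The one point that needs emphasis is uniformity in $x$: Proposition \ref{estimate max} asserts that its constants depend only on the Lipschitz constant of the embedding and on the ellipticity constant. Both $K$ (defined in (\ref{Lipschitz constant}) by a maximum over $s \in S$ and over all of $\Sigma$) and $\delta = \min_{s \in S}\mu(s)$ depend only on the subshift $(\Sigma,\tau)$ and on $\mu$, not on the particular non-periodic $x$ whose orbit we embedded. Hence the same constants $C_1, D, a_0$ work simultaneously for every non-periodic $x$, which is exactly what the statement requires. I do not foresee a real obstacle here; the entire content of the lemma is the reduction to Proposition \ref{estimate max} together with this uniformity observation.
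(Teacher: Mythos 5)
Your proposal is correct and follows exactly the paper's own route: the paper proves Lemma \ref{max k} precisely by the remark preceding it, which sets up the Schreier graph $\Gamma_x$ as a $K$-Lipschitz embedded graph in $\Z$ with a $\delta$-uniformly elliptic nearest-neighbour kernel and then applies Proposition \ref{estimate max}, noting that $K$ and $\delta$ do not depend on $x$. Your emphasis on the uniformity of the constants in $x$ is exactly the point the paper makes as well.
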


Combining Lemma \ref{coupling} with Lemma \ref{max k} we get:

\begin{cor}\label{single cylinder}
In the situation of Lemma \ref{max k}, for every large enough $n\in \mathbb{N}$ and every $L\geq 1$ the following holds. If $\mathcal{C}_w$ is a cylinder of depth $\lceil \sqrt{L n\log n}\rceil$, we have
\[\mathbb{P}(k_{g_n} \text{\emph{ is not constant on }} \mathcal{C}_w)\leq C_1 n^{-L/4D},\]
where $C_1$ and $D$ are the  constants from Lemma \ref{max k}.
\end{cor}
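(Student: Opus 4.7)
The plan is to combine Lemma \ref{coupling} pathwise with the Gaussian tail estimate from Lemma \ref{max k}. Write $l = \lceil \sqrt{L n \log n}\rceil$ for the depth of $\mathcal{C}_w$ and let $C_1, D, a_0$ be the constants from Lemma \ref{max k}.

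First, by Remark \ref{density non-periodic} I pick an arbitrary non-periodic point $x \in \mathcal{C}_w$. Viewing the random walk as a random $n$-tuple $(h_1, \ldots, h_n) \in S^n$ distributed as $\mu^{\otimes n}$, Lemma \ref{coupling} applies sample-pathwise: on the event $\{\max_{j \leq n} |k_{g_j}(x)| \leq l - l_0\}$, the cocycle $k_{g_j}$ is constant on $\mathcal{C}_w$ for every $j \leq n$, and in particular so is $k_{g_n}$. Taking complements yields the containment
\[\{k_{g_n} \text{ is not constant on } \mathcal{C}_w\} \subseteq \Big\{\max_{j \leq n} |k_{g_j}(x)| > l - l_0\Big\},\]
which requires $l > l_0$ (automatic for large $n$ since $L \geq 1$), so it suffices to bound the right-hand event.

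I then set $a = (l - l_0)/\sqrt{n}$ and apply Lemma \ref{max k}. Since $l \geq \sqrt{Ln \log n}$, one has $a \geq \sqrt{L \log n} - l_0/\sqrt{n}$, so for $n$ large enough (with a threshold depending only on $l_0, a_0$, not on $L$, because $L \geq 1$) we have $a \geq a_0$ and Lemma \ref{max k} gives
\[\P\Big(\max_{j \leq n} |k_{g_j}(x)| > l - l_0\Big) \leq C_1 \exp\!\Big(-\frac{(a - a_0)^2}{D}\Big).\]

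It remains to check the exponent is at least $L \log n/(4D)$. For $n$ large (uniformly in $L \geq 1$), the inequality $l_0/\sqrt{n} + a_0 \leq \tfrac{1}{2}\sqrt{L \log n}$ holds, whence $a - a_0 \geq \tfrac{1}{2}\sqrt{L \log n}$ and therefore $(a - a_0)^2/D \geq L\log n/(4D)$. Exponentiating yields the advertised bound $C_1 n^{-L/(4D)}$. The only nontrivial point in the argument is checking the uniformity of the large-$n$ threshold in $L$, which is routine bookkeeping that leans on the hypothesis $L \geq 1$; apart from this, the corollary is a direct concatenation of the two preceding lemmas.
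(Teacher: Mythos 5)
Your argument is correct and follows essentially the same route as the paper: pick a non-periodic point in the cylinder, use Lemma \ref{coupling} to reduce non-constancy of $k_{g_n}$ to the event that the maximal displacement exceeds $l-l_0$, then apply Lemma \ref{max k} with $a$ of order $\sqrt{L\log n}$ and absorb the $l_0/\sqrt{n}+a_0$ correction into half of $\sqrt{L\log n}$ for $n$ large, uniformly in $L\geq 1$. The only cosmetic difference is that you take $a=(l-l_0)/\sqrt{n}$ where the paper takes $a=\sqrt{L\log n}-l_0/\sqrt{n}$; both yield the stated exponent $L/(4D)$.
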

\begin{proof}
Pick  $x\in \mathcal{C}_w$ non-periodic. By Lemma \ref{coupling} if $k_{g_n}$ is  not constant on $\mathcal{C}_w$ we have $\max_{j\leq n}|k_{g_j}(x)|>\lceil \sqrt{L n\log n}\rceil-l_0$. To bound the probability of this event we apply Lemma \ref{max k} with $a=\sqrt{L \log n}-l_0/\sqrt{n}$. To simplify the formula, we take $n$ large enough so that $\sqrt{L \log n}-l_0/\sqrt{n}-a_0\geq \frac{1}{2}\sqrt{L \log n}$;  note that since we assume that $L\geq 1$, the minimal  $n$ verifying this can be chosen to depend only on  $a_0$ and $l_0$, and not on $L$ nor on the cylinder under consideration. With this choice, Lemma \ref{max k} gives the bound claimed in the Corollary.
\end{proof}

We are now ready to prove Theorem \ref{main}. \\

\emph{From now, we fully assume to be  in the situation of Theorem \ref{main}. In particular we assume that the complexity $\rho$ satisfies the assumption in the statement. We keep all the other notations introduced above in this section.}\\

To prove Theorem \ref{main}, we exhibit a sequence of finite subsets $A_n\subset G$ that grow sub-exponentially and such that $\mu^{*n}(A_n)\to 1$. Existence of such sets  is equivalent to the vanishing of  the random walk entropy by Fact \ref{entropy vanishing criterion}. We will then deduce entropy estimates using Fact \ref{quantitative entropy vanishing criterion}.

 \begin{defin}[Definition of the sets $A_n$]\label{definition of An}
 Suppose to be in the situation of Theorem \ref{main}. Choose $L>8D$, where $D$ is the constant from Corollary \ref{single cylinder}.
 
  Let  $\tilde{A}_n\subset G$ to be the set of all elements $g\in G$ such that for every cylinder $\mathcal{C}_w$ with depth  $\lceil \sqrt{L n\log n}\rceil$ the restriction of $k_g$  to $\mathcal{C}_w$ is constant. Finally set
\[A_n= \tilde{A}_n\cap B_{G,S}(n),\] 
 where $B_{G,S}(n)$ is the ball of radius $n$ in the word metric induced by $S$.

Denote by $\mathscr{C}(n)$ the collection of all non-empty cylinders with depth $\lceil \sqrt{L n\log n}\rceil$.  Every cylinder in $\mathscr{C}(n)$ is determined by a word of length $2\lceil \sqrt{L n\log n}\rceil+1$.
To simplify the notations, we extend the function $\rho$  to a piecewise affine function defined on $\R_+$, still denoted $\rho$, and we introduce a constant $C_2>0$ so that for every $n\geq 2$
\begin{equation}\label{cardinality cn}
|\mathscr{C}(n)|= \rho(2\lceil \sqrt{L n\log n}\rceil+1)\leq \rho(C_2\sqrt{n\log n}).
\end{equation}
 \end{defin}

\begin{lemma} \label{An subexponential}
 The cardinality of $A_n$ grows sub-exponentially, i.e.\ $\frac{1}{n}\log|A_n|\to 0$.
\end{lemma}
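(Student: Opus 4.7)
My plan is to bound $|A_n|$ above by counting, using two facts:

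(i) The element $g\in[[\tau]]$ is uniquely determined by its orbit cocycle $k_g$, and $k_g$ is uniquely determined by its restriction to the finite collection $\mathscr{C}(n)$ of non-empty cylinders of depth $l:=\lceil\sqrt{Ln\log n}\rceil$, since these cylinders partition $\Sigma$.

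(ii) For every $g\in\tilde{A}_n$, the restriction of $k_g$ to each cylinder in $\mathscr{C}(n)$ is a \emph{single integer}, and for $g\in B_{G,S}(n)$ Remark \ref{length bound} gives $|k_g(\cdot)|\le Kn$, so this integer belongs to the $(2Kn+1)$-element set $\{-Kn,\ldots,Kn\}$.

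Combining (i) and (ii) with the bound \eqref{cardinality cn} on $|\mathscr{C}(n)|$ yields
\[
|A_n|\;\le\;(2Kn+1)^{|\mathscr{C}(n)|}\;\le\;(2Kn+1)^{\rho(C_2\sqrt{n\log n})}.
\]
Taking logarithms gives
\[
\frac{\log|A_n|}{n}\;\le\;\frac{\rho(C_2\sqrt{n\log n})\,\log(2Kn+1)}{n}.
\]

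It remains to show this right-hand side tends to $0$, which is a direct calculus exercise from the complexity hypothesis $(\log m/m)^2\rho(m)\to 0$. Set $m_n=C_2\sqrt{n\log n}$, so that $\log m_n\sim\tfrac12\log n$ and hence $n\sim m_n^2/(2C_2^2\log m_n)$. Substituting,
\[
\frac{\rho(m_n)\log(2Kn+1)}{n}\;\asymp\;\frac{4C_2^2\,\rho(m_n)(\log m_n)^2}{m_n^2},
\]
which goes to $0$ by hypothesis.

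The only thing to double-check carefully is step (i): that the map $g\mapsto k_g$ is injective on $[[\tau]]$ (clear from $g(x)=\tau^{k_g(x)}(x)$), and that $k_g$ is entirely determined by its values on each cylinder of fixed depth $l$ (clear since these cylinders form a finite clopen partition of $\Sigma$ and $k_g$ is constant on each by the definition of $\tilde{A}_n$). No real obstacle arises; the argument is essentially just plugging the complexity bound into the counting.
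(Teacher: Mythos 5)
Your proof is correct and follows essentially the same route as the paper: bound the word length to get $|k_g(\cdot)|\le Kn$, note that an element of $A_n$ is determined by the (constant) value of its cocycle on each of the $|\mathscr{C}(n)|\le\rho(C_2\sqrt{n\log n})$ cylinders, and conclude $|A_n|\le(2Kn+1)^{\rho(C_2\sqrt{n\log n})}$. The only difference is that you spell out the final calculus step (the substitution $m_n=C_2\sqrt{n\log n}$), which the paper leaves implicit; your computation there is accurate.
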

  \begin{proof}

 By construction, the word length of any $g\in A_n$ does not exceed $n$. Hence by Remark \ref{length bound} for any $g\in A_n$ we have $|k_{g}(\cdot)|\leq Kn$ point-wise.

An element $g\in A_n$ is uniquely determined by the value of $k_g$ on every cylinder in $\mathscr{C}(n)$, and  we observed that $k_g$ does not exceed $Kn$ in absolute value, thus using (\ref{cardinality cn}):
 \begin{equation}\label{size of A} |A_n|\leq |\{-Kn,\cdots,0,\cdots,Kn\}|^{|\mathscr{C}(n)|}\leq(2Kn+1)^{\rho(C_2\sqrt{n\log n})}.\end{equation}
 The assumption on $\rho$ in the statement of Theorem \ref{main} guarantees that $\frac{1}{n}\log|A_n|\to0$. \qedhere
 \end{proof}
 
\begin{lemma} \label{An likely} We have $\mu^{*n}(A_n) \to 1$. \end{lemma}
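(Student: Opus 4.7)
The plan is a straightforward union bound over the finitely many cylinders in $\mathscr{C}(n)$, applying Corollary \ref{single cylinder} cylinder by cylinder and then using the complexity hypothesis to control the sum.

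First I would dispose of the word-ball condition for free: since $g_n=h_n\cdots h_1$ with each $h_i\in S$, one has $l_S(g_n)\le n$ deterministically, so $g_n\in B_{G,S}(n)$ almost surely. Hence $\mu^{*n}(A_n)=\P(g_n\in \tilde A_n)$, and it is enough to prove that $\P(g_n\notin \tilde A_n)\to 0$.

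Next, by definition of $\tilde A_n$ the event $\{g_n\notin \tilde A_n\}$ is the union, over $\mathcal{C}_w\in\mathscr{C}(n)$, of the events $\{k_{g_n}\text{ is not constant on }\mathcal{C}_w\}$. A union bound then gives
\[\P(g_n\notin \tilde A_n)\;\le\;\sum_{\mathcal{C}_w\in\mathscr{C}(n)}\P(k_{g_n}\text{ is not constant on }\mathcal{C}_w).\]
For $n$ large enough, Corollary \ref{single cylinder} bounds each summand (uniformly in the cylinder) by $C_1 n^{-L/(4D)}$. Combined with (\ref{cardinality cn}) this yields
\[\P(g_n\notin \tilde A_n)\;\le\;C_1\,\rho\bigl(C_2\sqrt{n\log n}\bigr)\, n^{-L/(4D)}.\]

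It remains to check that the right-hand side tends to $0$. Because we chose $L>8D$ we have $L/(4D)>2$, so $n^{-L/(4D)}\le n^{-2}$. The hypothesis $(\log n/n)^{2}\rho(n)\to 0$ means $\rho(m)=o(m^{2}/(\log m)^{2})$; applied with $m=C_2\sqrt{n\log n}$, and using $m^{2}\asymp n\log n$ and $\log m\asymp \log n$, this gives $\rho(C_2\sqrt{n\log n})=o(n/\log n)$. The product $n^{-2}\cdot o(n/\log n)=o\bigl((n\log n)^{-1}\bigr)$ indeed tends to $0$, which proves the lemma.

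The argument is essentially bookkeeping once Corollary \ref{single cylinder} is in hand; the complexity hypothesis was calibrated precisely so that the number of cylinders times the Gaussian-decay factor vanishes, so I do not expect any serious obstacle.
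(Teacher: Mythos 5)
Your proof is correct and follows the same route as the paper: reduce to the event $\{g_n\in\tilde A_n\}$ since $l_S(g_n)\le n$ deterministically, take a union bound over $\mathscr{C}(n)$, apply Corollary \ref{single cylinder} and (\ref{cardinality cn}), and use $L>8D$ together with the complexity hypothesis. Your final step merely spells out explicitly the estimate $\rho(C_2\sqrt{n\log n})=o(n/\log n)$ that the paper states in the equivalent form $n^{-\beta}\rho(C_2\sqrt{n\log n})\to 0$ for $\beta\ge 1$.
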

\begin{proof}
 We have  $\mu^{*n}(A_n)=\mathbb{P}(g_n\in A_n)$, where $(g_n)_{n\in \N}$ is the  random walk. Obviously $g_n$ lies in the ball of radius $n$, thus  we only need to prove that $k_{g_n}$ is constant on every cylinder in $\mathscr{C}(n)$  with probability tending to one as $n\to \infty$.
  
 We have
\begin{align*}1-\mu^{*n}(A_n)=\mathbb{P}(\exists \mathcal{C}_w\in \mathscr{C}(n)\::\: k_{g_n}\text{ is not constant on }\mathcal{C}_w)\leq\\
\sum_{\mathcal{C}_w\in \mathscr{C}(n)}\mathbb{P}( k_{g_n}\text{ is not constant on }\mathcal{C}_w).\end{align*}

   To bound to the last sum, use Corollary \ref{single cylinder}, which applies to  the cylinders in $\mathscr{C}(n)$ by the choice made in Definition \ref{definition of An}. Using (\ref{cardinality cn}) we get
     \begin{equation}\label{bound of q}1-\mu^{*n}(A_n)\leq C_1n^{-L/4D}|\mathscr{C}(n)|\leq C_1n^{-L/4D}\rho(C_2 \sqrt{n\log n}), 
 \end{equation} 
 where $C_1$ is the constant from Corollary \ref{single cylinder}. Note that by the choice of $L$ in Definition \ref{definition of An} we have $L/4D>2$, and this implies that $\mu^{*n}(A_n)\to 1$ by the assumption on $\rho$ (that guarantees that $ \frac{1}{n^\beta}\rho(C_2\sqrt{ n\log n})\to 0$ for every $\beta\geq 1$).
   \qedhere
\end{proof}

\begin{proof}[End of the proof of Theorem \ref{main}]
The fact that the asymptotic entropy vanishes immediately follows from Lemma \ref{An subexponential} and Lemma \ref{An likely} using Fact \ref{entropy vanishing criterion}.
To see that the claimed upper bound for $H(\mu^{*n})$ holds, use Point 1 from Fact \ref{quantitative entropy vanishing criterion} together with (\ref{size of A}) and (\ref{bound of q}) to get that there exists a constant $C_3>0$ such that
 \begin{align*} 
 H(\mu^{*n})\leq \log|A_n| + n(1-\mu^{*n}(A_n))\log|S|+\log 2 \leq \\
 C_3  \rho(C_2\sqrt{n\log n})\log n +C_3+C_3n^{1-L/4D}\rho(C_2\sqrt{n\log n}).
 \end{align*}
Finally observe that the last summand  tends to zero by the choice of $L$ in Definition \ref{definition of An} and by the assumption on $\rho$. This implies the claimed upper bound for the entropy.
\end{proof}

\begin{proof}[Proof of Corollary \ref{C: return probability}]
Using Point 2 of Fact \ref{quantitative entropy vanishing criterion} together with (\ref{size of A}) we get that there exists $C_4>0$ so that for every $n\geq 1$
\[\mu^{*2n}(e)\geq \frac{1}{C_4}\exp(-C_4\log n \rho(C_2\sqrt{n\log n})).\]
The claim on the F\o lner function follows from Nash inequality theory, see for instance \cite[Corollary 14.5 (b)]{Woess:book} or \cite{Pittet-Saloff:survey}.
\end{proof}

\section{Examples}
\label{S: examples}

\subsection{Irrational rotations}
\label{S: irrational rotations}

One of the first historical examples of infinite minimal subshifts are the so-called \emph{Sturmian subshifts} associated to irrational rotations of the circle, first defined in \cite{Morse-Hedlund}. For material on Sturmian subshifts see \cite[Chapter 2]{Sturmian}.

Let $\alpha\in(0,1)$ be irrational. Consider the irrational rotation
$R_\alpha:x\mapsto x+\alpha$ on the unit circle $\R/\Z$.  Take as alphabet $\A=\{a,b\}$, and let $\phi:\R/\Z\to\{a,b\}$ be defined by $\phi(x)=a$ if $x\in [0,\alpha)\: \operatorname{mod }1$ and $\phi(x)=b$ otherwise.
Let $\Sigma_\alpha\subset\{a,b\}^\Z$ be the closure of $\{(\phi(R_\alpha^j(x)))_{j\in \Z}\: :\: x\in \R/\Z\}$.
The subshift $( \Sigma_\alpha, \tau_\alpha)$ is minimal \cite{Hedlund} (in particular, it has no periodic points), and  its complexity is given by $\rho(n)=n+1$  see for instance \cite[Theorem 2.1.13]{Sturmian} (this is the slowest possible complexity for an infinite subshift). 

It follows  from Theorem \ref{main} that every finitely generated subgroup of $[[\tau_\alpha]]$ has the Liouville property. In particular, $[[\tau_\alpha]]'$ is an infinite,  finitely generated, simple Liouville group.

 If $\beta\neq \alpha$ are in $[0, 1/2)$, it is well known that the topological conjugacy classes of $(\Sigma_\alpha, \tau_\alpha)$ and $(\Sigma_\beta, \tau_\beta)$ are distinguished by their spectrum, see for instance \cite[pp. 3-4]{Cornulier:Bourbaki}. Thus the two systems are not topologically conjugated.

By  results of Giordano, Putnam and Skau \cite[Corollary 4.4]{Giordano-Putnam-Skau:flipconjugacy} and  Bezuglyi and  Medynets \cite[Theorem 5.2]{Bezuglyi-Medynets:flipconjugacy}, the commutator subgroups of the topological full groups of two Cantor minimal systems $(\Sigma, \tau)$ and $(\Sigma', \tau')$ are isomorphic if and only if $(\Sigma',\tau')$ is topologically conjugate to $(\Sigma,\tau)$ or to $(\Sigma, \tau^{-1})$.

It follows that when $\alpha$ runs in $[0,1/2)$, the groups  $[[\tau_\alpha]]'$  provide uncountably many pairwise non-isomorphic examples of simple Liouville groups, as claimed in Theorem \ref{T:simple Liouville}.

Let $S$ be a finite symmetric generating set of $[[\tau_\alpha]]'$, and let $\mu$ be a symmetric probability measure supported on $S$. Using the explicit value of the complexity, Corollary \ref{C: return probability} gives that for every $\varepsilon>0$ there exists a constant $C$ such that 
\begin{align*}
\mu^{*2n}(e)\geq \frac{1}{C}\exp(-Cn^{1/2+\varepsilon});\\
\operatorname{F\o l}_{[[\tau_\alpha]]', S}(n)\leq C\exp(Cn^{2+\varepsilon}).
\end{align*}
\subsection{Substitutions}\label{fibonacci}\label{S: substitutions}
Another source of subshifts with slow complexity are the \emph{substitution dynamical systems}. See the books \cite{Fogg, Queffelec} for a survey. These provide examples of both minimal and non-minimal subshifts satisfying the assumptions in Theorem \ref{main}.

Let $\A^*$ be the set of finite words in the alphabet $\A$. A \emph{substitution} is a map $\psi:\A\to\A^*$. Such a map obviously extends to $\A^*$ by concatenation, the extension is still denoted $\psi:\A^*\to\A^*$. It makes thus sense to consider iterations of $\psi$. We shall make two standing assumptions:
\begin{enumerate}
\item there exists a letter $a\in \A$ such that $\psi(a)$ begins with an $a$;
\item for every $b\in\A$ the length of $\psi^n(b)$ tends to infinity as $n\to\infty$. 

\end{enumerate}

Any substitution $\psi$ satisfying Conditions 1 and 2 defines a subshift $(\Sigma_\psi, \tau_\psi)$ by the following construction.

Condition 1 above implies that $\psi^n(a)$ is a prefix of $\psi^{n+1}(a)$ for every $n\in \N$. Thus we can pass to the limit and obtain a right-infinite sequence $\psi^{\infty}(a)$.  Chose an arbitrary letter $b\in\A$ and consider the bi-infinite sequence ${x}=\cdots bbbb\psi^\infty(a)\in\A^\Z$. Define $\Sigma_\psi$ as the set of cluster points of $(\tau^n({x}))_{n\geq 0}$, where $\tau$ is the shift. This set does not depend on the choice of $b\in \A$, it is a closed non-empty subset of $\A^\Z$ and it is invariant under the shift. We denote $(\Sigma_\psi, \tau_\psi)$ the subshift obtained in this way.

 A finite sub-word of $\psi^\infty(a)$ appears as a sub-word of a sequence in $\Sigma_\psi$ if and only if it appears infinitely many times in $\psi^\infty(a)$. 
In fact,  $\Sigma_\psi\subset \A^\Z$  coincides with the subset of $\A^\Z$ consisting of words such that every finite sub-word appears as a sub-word of $\psi^\infty(a)$, but the above construction gives more precise information.

We recall a partial case of a result due to Pansiot (see \cite[Theorem 4.7.1]{Complexity}).
\begin{thm}[cf. Theorem 4.7.55 in \cite{Complexity}]
If $\psi$ satisfies Conditions 1 and 2 above, there exists a constant $C>0$ such that the complexity $\rho$ of $\Sigma_\psi$ satisfies for every $n\geq 2$
\[\rho(n)\leq Cn\log n.\]

\end{thm}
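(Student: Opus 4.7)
My plan is to follow the structure of Pansiot's classical argument on complexities of substitution subshifts, which is a specific case of the complexity dichotomy recalled in [Complexity, Section 4.7].

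First I would reduce to a substitution where every letter has image of length at least $2$. By Condition~2, $|\psi^n(b)| \to \infty$ for every $b \in \A$, so some iterate $\Psi = \psi^{k_0}$ satisfies $|\Psi(b)| \geq 2$ for all $b$. Since $\Sigma_\Psi$ and $\Sigma_\psi$ agree as sets of bi-infinite sequences, and passing to an iterate changes the complexity only by a bounded multiplicative factor, I may assume $\min_b |\psi(b)| \geq 2$ from the start.

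The core combinatorial step is a desubstitution argument performed at the right scale. Given $n \geq 2$, let $k(n)$ be the smallest integer with $\min_b |\psi^{k(n)}(b)| \geq n$. Because $|\psi^k(b)| \geq 2^k$ for every $b$, one gets $k(n) \leq \log_2 n + 1$. At this scale, every $n$-subword $u$ of a sequence in $\Sigma_\psi$ is obtained as a factor of $\psi^{k(n)}(bc)$ for some length-$2$ subword $bc$ of $\Sigma_\psi$, uniquely determined by such a pair $(bc,i)$ where $i$ is the starting position of $u$ inside $\psi^{k(n)}(b)$. Setting $L_k := \max_b |\psi^k(b)|$ this gives
\[\rho(n) \leq 2 \,|\A|^2\, L_{k(n)}.\]

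It then remains to show $L_{k(n)} = O(n \log n)$. Minimality of $k(n)$ yields $\min_b |\psi^{k(n)-1}(b)| < n$, and hence $\min_b |\psi^{k(n)}(b)| < Mn$ with $M = \max_b |\psi(b)|$. Pansiot's key technical lemma asserts that, under Conditions~1 and~2, the ratio $L_k / \min_b|\psi^k(b)|$ grows at most linearly in $k$. Combined with $k(n)=O(\log n)$ this gives $L_{k(n)}=O(n\log n)$ and hence $\rho(n)=O(n\log n)$, as desired.

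The hard part will be this last claim on the ratio $L_k/\min_b |\psi^k(b)|$. Its proof rests on the Perron--Frobenius analysis of the incidence matrix $M_\psi \in \N^{\A\times\A}$, whose $(b,c)$-entry counts the occurrences of $c$ in $\psi(b)$. Condition~2 is essential there: it forces every letter to live in a strongly connected component of $M_\psi$ with spectral radius strictly greater than $1$, and this prevents the kind of geometric divergence between letter growth rates that would push the complexity into the $\Theta(n^2)$ regime of the general Pansiot dichotomy. Any careful proof therefore has to handle the interaction between different letter growth rates via Jordan block analysis of $M_\psi$, and it is precisely this step that gives rise to the logarithmic correction in the final bound.
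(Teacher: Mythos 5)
The paper does not actually prove this statement: it is quoted from the literature (Pansiot's theorem, via \cite[Theorem 4.7.55]{Complexity}), so your proposal has to stand on its own. Unfortunately the step you yourself flag as ``the hard part'' is not merely hard --- it is false. The lemma you attribute to Pansiot, namely that under Conditions 1 and 2 the ratio $L_k/\min_{b}|\psi^k(b)|$ grows at most linearly in $k$, already fails for $\A=\{a,b\}$, $\psi(a)=aaab$, $\psi(b)=bb$. Both conditions hold, yet $|\psi^k(b)|=2^k$ while $\psi^k(a)$ contains $3^k$ occurrences of $a$, so $L_k/\min_b|\psi^k(b)|\geq(3/2)^k$ grows exponentially. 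Condition 2 forces every letter to grow, but not to grow at the same exponential rate, and it is precisely such ``exponentially divergent'' growing substitutions that populate the $\Theta(n\log n)$ class in Pansiot's classification --- so your lemma breaks exactly in the case the theorem is really about. (The Perron--Frobenius heuristic is also off: a letter can satisfy Condition 2 while lying in a trivial strongly connected component of the incidence matrix, and distinct nontrivial components may have distinct spectral radii.)

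Concretely, in this example your desubstitution at the scale $k(n)\approx\log_2 n$ dictated by the slowest letter yields only $\rho(n)\leq 2|\A|^2L_{k(n)}$ with $L_{k(n)}\gtrsim 3^{\log_2 n}=n^{\log_2 3}$, which is polynomially worse than $n\log n$. The loss comes from counting every starting position inside $\psi^{k(n)}(bc)$ as a distinct factor; when letters grow at different rates this is far too generous, and the genuine proof (Pansiot; see also the argument reproduced in \cite{Complexity}) must desubstitute each factor at a depth adapted to the letters it actually contains and count ancestors more carefully. The first two steps of your sketch (the reduction to $\min_b|\psi(b)|\geq 2$, and the bound $\rho(n)\leq 2|\A|^2L_{k(n)}$, using that the factors of $\Sigma_\psi$ are among those of $\psi^\infty(a)$) are fine, and they do yield the theorem --- indeed a linear bound --- in the quasi-uniform case where all letters grow comparably, e.g.\ for primitive substitutions; but as written the argument does not establish the general $O(n\log n)$ bound.
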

\begin{cor}\label{C: substitutions}
Under the same assumptions, every finitely generated subgroup of $[[\tau_\psi]]$ has the Liouville property.  In particular $[[\tau_\psi]]$ is amenable.
\end{cor}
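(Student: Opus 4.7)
The plan is to derive Corollary \ref{C: substitutions} directly from Theorem \ref{main} applied to $(\Sigma_\psi, \tau_\psi)$, with amenability then supplied by Corollary \ref{C: amenable}. Two hypotheses of Theorem \ref{main} need to be checked: the complexity growth condition and the absence of isolated periodic points.

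The complexity hypothesis is immediate from the Pansiot bound just quoted: $\rho(n) \leq C n \log n$ gives
\[\left(\frac{\log n}{n}\right)^{2} \rho(n) \leq C \frac{(\log n)^{3}}{n} \to 0\]
as $n \to \infty$, so the complexity assumption of Theorem \ref{main} holds.

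The more delicate step is to verify that $(\Sigma_\psi, \tau_\psi)$ has no isolated periodic points. By Remark \ref{density non-periodic} this is equivalent to density of non-periodic points in $\Sigma_\psi$. I would argue as follows. By construction $\Sigma_\psi$ coincides with the $\omega$-limit set of the seed $x = \cdots bbbb.\psi^{\infty}(a)$ under $\tau$: every $y \in \Sigma_\psi$ is a subsequential limit $\tau^{n_k} x \to y$ with $n_k \to \infty$, and for $n$ large the window of $\tau^{n} x$ around position $0$ coincides with the window of $\psi^{\infty}(a)$ around position $n$. Hence $\Sigma_\psi$ is entirely determined by the language of finite subwords of $\psi^{\infty}(a)$. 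Unless $\psi^{\infty}(a)$ is eventually periodic (in which case $\Sigma_\psi$ reduces to a finite union of periodic orbits and $[[\tau_\psi]]$ is a finite group, for which the statement of the corollary is trivial), every periodic point $p \in \Sigma_\psi$ can be approximated by non-periodic elements of $\Sigma_\psi$: the central window $p_{-N}\cdots p_{N}$ of $p$ appears at infinitely many positions $n_j \to \infty$ in $\psi^{\infty}(a)$, and among these occurrences the lateral extensions in $\psi^{\infty}(a)$ cannot all coincide with the periodic extension of $p$, so a subsequential limit produces a point of $\Sigma_\psi$ agreeing with $p$ on $[-N, N]$ but distinct from $p$.

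Granted the two hypotheses, Theorem \ref{main} yields vanishing asymptotic entropy for every symmetric, finitely supported probability measure on every finitely generated subgroup $G \leq [[\tau_\psi]]$, and hence the Liouville property for $(G, \mu)$ by the Kaimanovich--Vershik/Derriennic entropy criterion. Amenability of $[[\tau_\psi]]$ follows from Corollary \ref{C: amenable}, since a group is amenable if and only if all its finitely generated subgroups are. The main obstacle is precisely the rigorous verification of absence of isolated periodic points in the full generality allowed by Conditions 1 and 2: these conditions do not imply primitivity of $\psi$, so $\Sigma_\psi$ can be non-minimal, and some care is required to analyse the $\omega$-limit structure of the seed $x$ and separate off the degenerate case in which $\psi^{\infty}(a)$ is eventually periodic.
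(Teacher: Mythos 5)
Your overall route is the same as the paper's: apply Theorem \ref{main} to $(\Sigma_\psi,\tau_\psi)$, check the complexity hypothesis via Pansiot's bound (your computation $(\log n/n)^2\rho(n)\leq C(\log n)^3/n\to 0$ is correct), dispose of the case where $\psi^\infty(a)$ is eventually periodic (then $[[\tau_\psi]]$ is finite), and verify that no periodic point of $\Sigma_\psi$ is isolated. The problem is that you have asserted, rather than proved, the one nontrivial step. You write that the central window $p_{-N}\cdots p_N$ occurs at infinitely many positions in $\psi^\infty(a)$ and that ``the lateral extensions cannot all coincide with the periodic extension of $p$, so a subsequential limit produces a point agreeing with $p$ on $[-N,N]$ but distinct from $p$.'' As stated, this inference fails: even though no occurrence of the window extends to a fully periodic tail (since $\psi^\infty(a)$ is not eventually periodic), the position at which the $j$-th occurrence first deviates from the periodic continuation could drift to infinity with $j$, in which case every subsequential limit of the recentred shifts is $p$ itself. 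To conclude you need deviations at a \emph{uniformly bounded} distance from the window, occurring \emph{infinitely often} (the latter because a word belongs to the language of $\Sigma_\psi$ only if it occurs infinitely many times in $\psi^\infty(a)$).

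The paper closes exactly this gap with a sliding-window argument. Writing $w$ for the period of $p$ ($|w|=n$) and taking the window to be $w^{2l}$: if all but finitely many occurrences of $w^{2l}$ in $\psi^\infty(a)$ were followed by another copy of $w$, then from one such occurrence one obtains an occurrence of $w^{2l}$ shifted by one period, again followed by $w$, and by induction the entire tail of $\psi^\infty(a)$ from that point on equals $w^\infty$, contradicting non-eventual-periodicity. Hence infinitely many occurrences of $w^{2l}$ are immediately followed by some word of length $n$ different from $w$; by pigeonhole a single $w'\neq w$ works infinitely often, so $w^{2l}w'$ lies in the language of $\Sigma_\psi$ and yields a point $y^{(l)}\in\Sigma_\psi$ agreeing with $p$ on $[-nl,nl-1]$ but not equal to $p$. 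You flag at the end that ``some care is required'' here; that care is precisely this bootstrapping step, and without it the proof is incomplete. Everything else in your proposal (the reduction to Theorem \ref{main}, the complexity estimate, and the deduction of amenability from Corollary \ref{C: amenable} via finitely generated subgroups) is correct and matches the paper.
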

\begin{proof}[Proof of Corollary \ref{C: substitutions}]
We may assume that $\psi^\infty(a)$ is not eventually periodic, since in this case $\Sigma_\psi$ is finite and $[[\tau_\psi]]$ is a finite group. To apply Theorem \ref{main}, we only need to check that no periodic point is isolated in $\Sigma_\psi$. Let  $x=(x_j)_{j\in\Z}\in\Sigma_\psi$ be a $n$-periodic point, and let $w=x_0\cdots x_{n-1}$ be its period. Fix $l>0$ and consider the word $w^{2l}$ consisting of $2l$ concatenations of $w$. Since $w^{2l}$ appears in $x$, it appears infinitely often in $\psi^\infty(a)$. Since $\psi^\infty(a)$ is not eventually periodic, we deduce that it admits  an infinite sequence of sub-words of the form  $(w^{2l}w_k)_{k\geq 1}$ where $w_k\neq w$ has length $n$. Since there are only finitely many possibilities for $w_k$, there exists $w'\neq w$ of length $n$ such that  $w^{2l}w'$ appears infinitely often. It follows that $w^{2l}w'$ appears as a sub-word of a sequence in $\Sigma_\psi$. By shift-invariance, there exists $y^{(l)}=(y^{(l)}_j)_{j\in\Z}\in\Sigma_\psi$ such that $(y^{(l)}_j)_{-nl\leq j\leq nl-1}=w^{2l}=(x_j)_{-nl\leq j\leq nl-1}$  and $(y^{(l)}_j)_{nl\leq j\leq nl+n-1}=w'\neq w= (x_j)_{nl\leq j\leq nl+n-1}$. We have $y^{(l)}\neq x$ and $y^{(l)}\to x$ as $l\to\infty$, thus $x$ is not isolated.\qedhere

\end{proof}

A substitution is said to be \emph{primitive} if there exists $l>0$ such that for every ordered pair of letters $x,y\in\A$ the letter $x$ appears in $\psi^l(y)$. Note that whenever $\psi$ is a primitive substitution and the alphabet contains at least two letters, Condition 2 above is automatically verified, and Condition 1  is always verified up to passing to an iteration of $\psi$. We recall in the next Proposition two well-known facts about primitive substitutions.
\begin{prop}[see Proposition 5.5 and Proposition 5.12 in \cite{Queffelec}]\label{P: primitive substitutions}
Let $\psi$ be a substitution satisfying Conditions 1 and 2 above. Then $\psi$ is primitive if and only if  $(\Sigma_\psi, \tau_{\psi})$ is minimal. Moreover in this case there exists $C>0$ such that for every $n\geq 1$
\[\rho(n)\leq Cn,\]
where $\rho$ is the complexity of $\Sigma_\psi$.
\end{prop}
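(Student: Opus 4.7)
The plan is to prove each half of the equivalence separately, then the complexity bound, following the classical arguments of \cite{Queffelec}.

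For \emph{primitivity $\Rightarrow$ minimality}, I would verify uniform recurrence of $\Sigma_\psi$, which is equivalent to minimality for a subshift. Let $w$ be a factor of some sequence in $\Sigma_\psi$; by the construction of $\Sigma_\psi$, $w$ appears as a sub-word of $\psi^n(a)$ for some $n$, where $a$ is the letter from Condition 1. By primitivity with constant $\ell$, each $\psi^\ell(b)$ contains the letter $a$, so $\psi^{n+\ell}(b)$ contains $\psi^n(a)$, and therefore $w$, as a sub-word for every $b\in\A$. Any $y\in\Sigma_\psi$ can then be decomposed as a concatenation of blocks of the form $\psi^{n+\ell}(b_i)$ (up to a boundary term absorbed by a shift), so $w$ occurs in $y$ with gap bounded by $2\max_{b}|\psi^{n+\ell}(b)|$, which is uniform in the starting position. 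This gives uniform recurrence.

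For \emph{minimality $\Rightarrow$ primitivity}, I would first reduce to the sub-alphabet of letters that actually occur in elements of $\Sigma_\psi$; this does not change the subshift. Given $b,c$ in this reduced alphabet, the letter $b$ appears in some sequence of $\Sigma_\psi$, so by minimality it appears with bounded gaps in every sequence in $\Sigma_\psi$, in particular in the fixed sequence $\psi^\infty(a)$. Moreover, since $c$ appears in $\Sigma_\psi$, it appears in $\psi^m(a)$ for some $m$, so $\psi^n(c)$ is a factor of $\psi^{n+m}(a)$ for every $n$. As $|\psi^n(c)|\to\infty$ by Condition 2, the word $\psi^n(c)$ is eventually long enough to contain $b$; taking the maximum over the finite set of pairs $(b,c)$ yields a common $\ell$ with every $\psi^\ell(c)$ containing every $b$, i.e.\ primitivity.

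For the \emph{linear complexity bound} in the primitive case, I would use the standard right-special factor argument. Set $s(n)=\rho(n+1)-\rho(n)$, which equals the number of factors of length $n$ admitting at least two distinct right extensions in $\Sigma_\psi$. A telescoping sum gives $\rho(n)\leq \rho(1)+n\sup_{m\geq 1} s(m)$, so it suffices to bound $s(n)$ uniformly in $n$. The desubstitution procedure associates to each right-special factor $w$ of length $n$ (with $n$ larger than $\max_b|\psi(b)|$) a strictly shorter right-special factor, obtained by stripping a bounded prefix and suffix and applying $\psi^{-1}$ to the middle; primitivity, through its control on the Perron eigenvalue of the incidence matrix and on the recognisability of $\psi$, ensures that iterating this procedure eventually reaches a finite set of "seed" factors, yielding a uniform bound on $s(n)$.

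The main obstacle is the technical bookkeeping in the desubstitution step, in particular the recognisability property needed to make $\psi^{-1}$ well-defined on long factors and the careful accounting of the boundary pieces. Since the statement is quoted as \cite[Proposition 5.5, Proposition 5.12]{Queffelec}, I would rely on those references for the full combinatorial argument rather than reproduce it in detail.
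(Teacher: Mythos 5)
The paper does not actually prove this Proposition: it is quoted as a known fact from Queff\'elec's book (``We recall in the next Proposition two well-known facts\dots''), so there is no in-paper argument to compare yours against, and your decision to defer the heaviest combinatorial step to the same reference is consistent with how the paper treats it. On its own merits, your first direction (primitivity $\Rightarrow$ minimality via uniform recurrence and the block decomposition $\psi^\infty(a)=\psi^{n+\ell}(\psi^\infty(a))$) is the standard argument and is fine. Two caveats on the rest. First, ``minimality $\Rightarrow$ primitivity'' is false as literally stated without the convention that every letter of $\A$ actually occurs in $\Sigma_\psi$: e.g.\ $\psi(a)=ab$, $\psi(b)=bb$ satisfies Conditions 1 and 2, gives the minimal (indeed finite) subshift $\{\cdots bbb\cdots\}$, yet is not primitive. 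Your restriction to the occurring sub-alphabet only proves primitivity of the restricted substitution, so you should state explicitly that this convention is in force (as it implicitly is in the cited source); with it, your argument goes through.

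Second, for the linear complexity bound your right-special-factor/desubstitution route is viable but much heavier than needed: making $\psi^{-1}$ well defined on long factors requires Moss\'e's recognizability theorem (and a separate treatment of the periodic case where it fails), which is a far deeper input than the statement warrants. The standard elementary proof runs as follows. Primitivity of the incidence matrix gives, via Perron--Frobenius, a constant $C'$ with $\max_b|\psi^k(b)|\le C'\min_b|\psi^k(b)|$ for all $k$. Take the least $k$ with $\min_b|\psi^k(b)|\ge n$; then $\max_b|\psi^k(b)|\le C'\min_b|\psi^k(b)|\le C'n\max_b|\psi(b)|$. Since $\psi^\infty(a)=\psi^k(\psi^\infty(a))$ is a concatenation of blocks $\psi^k(c)$, every factor of length $n$ is a factor of $\psi^k(v)$ for some factor $v$ of length $2$, and is determined by $v$ together with its starting position inside $\psi^k(v)$; hence $\rho(n)\le \rho(2)\cdot 2\max_b|\psi^k(b)|\le Cn$. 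This avoids recognizability entirely and is, I believe, essentially the proof behind the cited Proposition 5.12.
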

It follows that for any primitive substitution $\psi$, the group $[[\tau_\psi]]'$ is a finitely generated simple Liouville group. Moreover Corollary \ref{C: return probability} combined with Proposition \ref{P: primitive substitutions} provides estimates for its return probabilities and its F\o lner function.

 Primitive substitutions subshifts, as well as the Sturmian subshifts  described in Subsection \ref{S: irrational rotations}, belong to the class of minimal subshfits with \emph{linearly growing} complexity. This is a well studied class of minimal subshifts, see for instance Ferenczi  \cite[Proposition 5]{Ferenczi:linearcomplexity}.

Let us recall an explicit  finitely generated group which appears as an example  in  \cite{Juschenko-Nekrashevych-Salle:recurrentgrupoids}.
Consider the \emph{Fibonacci substitution} on the alphabet $\A=\{a,b\}$ given by 
\begin{align*}
\psi\::\:&a\mapsto ab\\
&b\mapsto a.
\end{align*}
We obtain the right-infinite sequence
$$\psi^\infty(a)=abaababaabaababaababa\cdots.$$
The Fibonacci substitution is primitive, thus it yields a minimal subshift $(\Sigma_\psi, \tau_\psi)$. In fact, the subshift $(\Sigma_\psi, \tau_\psi)$ is also a Sturmian subshift, obtained from the irrational rotation by the golden ratio $\phi=(1+\sqrt{5})/2$, see \cite[Example 2.1.1]{Sturmian} (the reader may consult \cite[Section 2.1, Proposition 5.2.21]{Fogg} for an example of a primitive substitution subshift which is not conjugate to a Sturmian subshift, given by the \emph{Thue-Morse} substitution).  Let $\alpha,\beta,\gamma$ act on a sequence $x=\cdots x_{-1}.x_0x_1\cdots\in \Sigma_\psi$ by

\begin{align*}&\left\{\begin{array}{lr}
\alpha(x)=\tau_\psi(x) & \text{ if }x_{-1}x_0=aa\\
\alpha(x)=\tau_\psi^{-1}(x) &\text{ if } x_{-2}x_{-1}=aa\\
\alpha(x)=x& \text{ otherwise,}
\end{array}\right.\left\{\begin{array}{lr}
\beta(x)=\tau_\psi(x) & \text{ if }x_{-1}x_0=ba\\
\beta(x)=\tau_\psi^{-1}(x) &\text{ if } x_{-2}x_{-1}=ba\\
\beta(x)=x& \text{ otherwise,}
\end{array}\right.\\ &\left\{\begin{array}{lr}
\gamma(x)=\tau_\psi(x) & \text{ if }x_0=b\\
\gamma(x)=\tau_\psi^{-1}(x) &\text{ if } x_{-1}=b\\
\gamma(x)=x& \text{ otherwise.}
\end{array}\right.\end{align*}
One can check that $\alpha,\beta,\gamma$ are elements of $[[\tau_\psi]]$ and are involutions.  Set $G=\langle \alpha,\beta,\gamma\rangle$. By Corollary \ref{C: substitutions},  the group $G$ is Liouville. Let $\mu$ be the measure equidistributed on $S=\{\alpha,\beta,\gamma\}$. For every point $x\in\Sigma_\psi$ the Schreier graph of $G$ acting on the  orbit of $x$ with respect to the generating set $S$ is isomorphic to $\Z$, with additional loops based at every vertex. If $(g_n)_{n\in\N}$ is the random walk on $G$ then $(k_{g_n}(x))_{n\in\N}$ performs a lazy random walk on $\Z$ that at each step chooses  whether to  stay, go left, or go right, each with probability $\frac{1}{3}$. 

We conclude this section by giving an example of a non-minimal subshift, to which Theorem \ref{main} applies. Consider the non-primitive substitution $\psi$ on the alphabet $\A=\{a,b\}$ given by
\begin{align*}
\psi\::\:&a\mapsto aba\\
&b\mapsto bb.
\end{align*}
This substitution verifies Conditions 1 and 2 stated at the beginning of this subsection. Since $\psi$ is not primitive, the subshift $(\Sigma_\psi, \tau_\psi)$ is not minimal (in fact it contains the constant sequence $\cdots bbb\cdots$). The complexity $\rho$ of $\Sigma_\psi$ is computed explicitly in \cite[Paragraph 4.10.5]{Complexity}, where it is shown that the limit $\lim_{n\to\infty}\rho(n)/n\log\log n$ exists.  It follows from Corollary \ref{C: substitutions} that $[[\tau_\psi]]$ is amenable and its finitely generated subgroups are Liouville.

\subsection{Toeplitz subshifts}\label{S: Toeplitz}
 \emph{Toeplitz subshifts} provide examples of applications of Theorem \ref{main} to minimal subshifts with super-linear complexity. The complexity of Toeplitz subshifts is studied by Cassaigne and Karhum\"aki in \cite{Toeplitz}. I am grateful to Val\'erie Berth\'e for bringing this example to my attention.

Let $\mathcal{A}$ be a finite alphabet, and consider the alphabet $\A\cup\{*\}$, where $*\notin\A$ is an extra letter, thought of as a ``hole''.
Let $w$ be a finite word in the alphabet $\A\cup\{*\}$ starting with a letter in $\A$. To such a word, we associate a right infinite \emph{Toeplitz word} in the alphabet $\A$, denoted $T_\infty(w)$ and obtained as follows. Let 
$T_1(w)=w^\N=wwww\cdots$  be the right-infinite periodic word in the alphabet $\A\cup\{*\}$ with period $w$. At step $i$, build a new  right-infinite word $T_i(w)$ in the alphabet $\A\cup\{*\}$ obtained from $w^\N$ by ``filling the holes''  with the sequence $T_{i-1}(w)$, i.e.\  the first  appearance of $*$ in $w^\N$ is replaced by the first letter of $T_{i-1}(w)$, the second appearance by the second letter, and so on. In the limit, $T_i(w)$ converges to a right infinite word $T_\infty(w)$ in the alphabet $\mathcal{A}$ (we use here the assumption that the first letter of $w$ belongs to $\A$).
For example the word $w=a*ab*a$ yields the Toeplitz word 
\[T_\infty(a*ab*a)=aaabaaaaabbaaaabaaaaabaa\cdots.\]
We obtain a \emph{Toeplitz subshift} $(\Sigma_w, \tau_w)$, where $\Sigma_w\subset\A^\Z$ consists of all bi-infinite sequences so that any finite sub-word appears in $T_\infty(w)$. Any Toeplitz subshift is minimal, see \cite[p. 499]{Toeplitz}, and it is  infinite as soon as the Toeplitz word $T_\infty(w)$ is not eventually periodic. We recall in the next theorem  a particular case of the results from \cite{Toeplitz}.
\begin{thm}[Cassaigne and Karhum\"aki, see Theorems 4 and 5 in \cite{Toeplitz}] 
 Suppose that $w$ has length $p>1$ and $q\geq 1$ appearances of $*$, and that $\operatorname{gcd}(p,q)=1$. Assume also that $w$ contains at least two different letters in $\A$. Then the subshift $(\Sigma_w, \tau_w)$ is infinite and minimal. Moreover there exist positive constants $C_1\leq C_2$ such that complexity $\rho$ of $\Sigma_w$ satisfies 
\[{C_1}n^{{\log p}/{\log (p/q)}}\leq \rho(n)\leq C_2n^{{\log p}/{\log (p/q)}}\]
for every $n\geq 1$.
\end{thm}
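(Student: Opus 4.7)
The plan is to leverage the self-similar structure of $T_\infty(w)$ to obtain matching upper and lower bounds on $\rho(n)$, in the spirit of the ``arithmetic complexity'' techniques used for Toeplitz sequences.

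\textbf{Self-similar structure.} Let $H \subset \{0, 1, \ldots, p-1\}$ denote the positions of $\ast$ in $w$, so $|H| = q$. For $j \geq 0$ written as $j = \alpha p + \beta$ with $0 \leq \beta < p$, define the ``hole-rank'' $\phi(j) = \alpha q + |H \cap \{0, \ldots, \beta-1\}|$, restricted to $j$ with $\beta \in H$. By construction of $T_\infty(w)$, one obtains the fixed-point equation
\[ T_\infty(w)[j] = \begin{cases} w[j \bmod p] & \text{if } j \bmod p \notin H, \\ T_\infty(w)[\phi(j)] & \text{if } j \bmod p \in H, \end{cases} \]
and $\phi$ is an order-preserving bijection from hole positions to $\mathbb{Z}_{\geq 0}$.

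\textbf{Minimality and infinitude.} Minimality holds for every Toeplitz subshift and is classical (already cited in the paper). For infinitude it suffices to show that $T_\infty(w)$ is not eventually periodic. Suppose it had period $P$. Then combining the self-similar equation with the fact that the hole positions have density $q/p$ and are arranged with common difference structure tied to $p$, one checks that $P$ must be divisible by $p$; applying the same argument to the sub-sequence $(T_\infty(w)[\phi^{-1}(m)])_m$ gives $p^2 \mid P$, and iterating gives $p^i \mid P$ for all $i$, a contradiction. The hypothesis $\gcd(p, q) = 1$ is what prevents the divisibility argument from collapsing, and the presence of two distinct letters of $\mathcal{A}$ in $w$ is needed to exclude the trivially periodic case where $T_\infty(w)$ is constant.

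\textbf{Upper bound.} The main ingredient is the recursion
\[ \rho(n) \;\leq\; p \cdot \rho\bigl(\lceil nq/p \rceil + q\bigr). \]
Indeed, any length-$n$ factor is determined by (a) the starting position $k \bmod p$, which by the fixed-point equation fixes the values of $T_\infty(w)$ at every non-hole position of the window $[k,k+n)$, and (b) the sequence of values at the hole positions of the window, which by the consecutive-index property of $\phi$ forms a length-$m$ factor of $T_\infty(w)$ itself, with $m \leq \lceil nq/p \rceil + q$. Iterating the recursion $i = \lceil \log n / \log(p/q) \rceil$ times reduces the argument of $\rho$ to a bounded value, yielding $\rho(n) \leq C p^i \leq C_2\, n^{\log p / \log(p/q)}$.

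\textbf{Lower bound and main obstacle.} For the matching lower bound I would show that for $n$ of the form $\asymp p^i / q^i$, the $p^i$ starting positions $k \in \{0, 1, \ldots, p^i -1\}$ produce $\Omega(p^i)$ distinct length-$n$ factors of $T_\infty(w)$. Concretely, one argues that if two such shifts $k \neq k' \pmod{p^i}$ yielded identical factors of length $n$, then $T_\infty(w)$ would admit a non-trivial ``quasi-period'' in the window; an iterative use of the self-similar equation descends this quasi-period through the levels and forces $\gcd(p,q) \neq 1$, contradicting the hypothesis. The delicate point — and the main obstacle — is to make this descent quantitative, controlling how much of the window is needed to detect the different non-hole patterns at each level. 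The assumption that $w$ contains two distinct letters of $\mathcal{A}$ provides the initial ``signal'' separating level-$1$ residues, which propagates through the self-similarity to all higher levels thanks to $\gcd(p,q) = 1$.
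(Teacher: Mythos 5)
First, note that the paper does not prove this statement at all: it is imported verbatim from Cassaigne and Karhum\"aki (Theorems 4 and 5 of \cite{Toeplitz}) and used as a black box, so there is no internal proof to compare against. Judged on its own merits, your upper-bound argument is sound and is essentially the standard one: the recursion $\rho(n)\leq p\,\rho(\lceil nq/p\rceil+q)$ follows correctly from the fixed-point equation (a window of length $n$ is determined by its starting residue mod $p$ together with the contiguous block of $\phi$-images of its hole positions, of length at most $nq/p+q$), and iterating it $\lceil\log n/\log(p/q)\rceil$ times does give $\rho(n)\leq C_2 n^{\log p/\log(p/q)}$, using $q<p$ so that the additive $+q$ terms sum geometrically. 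This is also the only half of the theorem that the paper actually needs, since Theorem \ref{main} only consumes an upper bound on $\rho$.

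However, your proposal contains two genuine gaps. The lower bound is not proved: you state the intended strategy and then explicitly flag the quantitative descent --- controlling how large a window is needed to separate the $p^i$ residue classes at level $i$ --- as ``the main obstacle,'' without resolving it. This is precisely where the hypotheses $\gcd(p,q)=1$ and the presence of two distinct letters do real work in Cassaigne--Karhum\"aki's argument (they carefully track which positions are filled at each level and show the level-$i$ structure has period exactly $p^i$), and without it you have only $\rho(n)\leq C_2 n^{\log p/\log(p/q)}$, not the two-sided estimate claimed. Secondly, the infinitude argument is hand-waved: ``one checks that $P$ must be divisible by $p$'' is the entire content of that step, and the claimed role of $\gcd(p,q)=1$ in preventing the divisibility chain from collapsing is asserted rather than demonstrated. (In fact, infinitude would also follow at once from the unproven lower bound, since $\log p/\log(p/q)>1$ forces $\rho$ to be unbounded; so the two gaps are not independent.) As written, the proposal establishes minimality (by citation) and the upper bound, but not the lower bound or infinitude.
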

This produces examples of minimal subshifts with super-linear complexity satisfying the assumptions in Theorem \ref{main} 
\begin{cor}
Suppose moreover that $q^2<p$. Then every finitely generated subgroup of $[[\tau_w]]$ has the Liouville property. In particular $[[\tau_w]]'$ is a finitely generated, infinite, simple Liouville group.
\end{cor}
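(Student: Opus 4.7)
The plan is to verify the two hypotheses of Theorem \ref{main} under the extra condition $q^2<p$, and then quote Matui's structural results for the ``in particular'' statement. The whole argument is a direct translation of the Cassaigne--Karhum\"aki polynomial complexity bound into the sub-quadratic growth condition $(\log n/n)^2\rho(n)\to 0$, so there is no real obstacle; the content of the corollary is essentially the observation that $q^2<p$ is exactly the right numerical threshold.

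First I would check that $(\Sigma_w,\tau_w)$ has no isolated periodic points. Under the standing assumption (and since we further assume $q^2<p\leq p$, in particular $p>1$ and $w$ contains two distinct letters in $\A$), the Cassaigne--Karhum\"aki theorem quoted just above gives that $(\Sigma_w,\tau_w)$ is infinite and minimal. Any infinite minimal subshift contains no periodic points at all, since the shift-orbit of a periodic point is finite and, by minimality, would coincide with $\Sigma_w$, contradicting infinity. Hence the hypothesis on isolated periodic points of Theorem \ref{main} is (trivially) satisfied.

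Next I would verify the complexity condition. Set $\alpha=\log p/\log(p/q)$. The assumption $q^2<p$ is equivalent to $p/q^2>1$, i.e.\ $\log p<2\log(p/q)$, i.e.\ $\alpha<2$. The upper bound from Cassaigne--Karhum\"aki then reads $\rho(n)\leq C_2 n^{\alpha}$, so
\[
\Bigl(\frac{\log n}{n}\Bigr)^{\!2}\rho(n)\leq C_2(\log n)^2 n^{\alpha-2}\xrightarrow[n\to\infty]{}0,
\]
since $\alpha-2<0$. Thus the hypothesis on $\rho$ in Theorem \ref{main} is satisfied, and the theorem applies: for every symmetric, finitely supported (possibly degenerate) probability measure $\mu$ on $[[\tau_w]]$ the random walk entropy vanishes, equivalently (Kaimanovich--Vershik) $(\,[[\tau_w]],\mu)$ is Liouville. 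Since this covers degenerate $\mu$, every finitely generated subgroup $H\leq [[\tau_w]]$ is Liouville: apply the theorem to a symmetric finitely supported measure on $[[\tau_w]]$ supported on a generating set of $H$, and to any symmetric finitely supported measure on $H$ regarded as a measure on $[[\tau_w]]$.

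Finally, for the ``in particular'' statement, since $(\Sigma_w,\tau_w)$ is a minimal infinite subshift it is automatically a Cantor minimal system (as recalled in Subsection \ref{S: topological full groups}), and Matui's results \cite[Theorem 4.9, Theorem 5.4]{Matui:simple} give that $[[\tau_w]]'$ is infinite, simple, and finitely generated. Combined with the Liouville property obtained in the previous step (applied to $H=[[\tau_w]]'$), this yields that $[[\tau_w]]'$ is a finitely generated, infinite, simple Liouville group.
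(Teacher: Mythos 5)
Your proof is correct and follows exactly the route the paper intends (the paper states this corollary without writing out the argument): check $\alpha=\log p/\log(p/q)<2$ is equivalent to $q^2<p$ so the Cassaigne--Karhum\"aki bound puts $\rho$ within the scope of Theorem \ref{main}, note that an infinite minimal subshift has no periodic points at all, and invoke Matui for simplicity and finite generation of $[[\tau_w]]'$. Nothing to add.
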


\appendix
We list here some well-known properties of  entropy, and in particular Facts \ref{entropy vanishing criterion}  and \ref{quantitative entropy vanishing criterion} that we have used to establish Theorem \ref{main}. Basic definitions and notations on random walk entropy have been introduced in Subsection \ref{S: topological full groups}, before the statement of Theorem \ref{main}.

The first fact is  elementary and we omit the proof.
\begin{fact}[Elementary properties]\label{elementary properties of entropy}
Let $X$ be a countable set and $\nu$ be a probability measure on $X$. 
\begin{enumerate}
\item If the support of $\nu$ is finite, we have $H(\nu)\leq \log |\operatorname{supp}(\nu)|$.
\item Suppose that $\nu=\sum_{i\geq 0}\alpha_i\nu_i$, where the $\nu_i$ are probability measures on $X$ and the $\alpha_i$ are positive reals such that $\sum_{i\geq 0}\alpha_i=1$. Then
\[H(\nu)\leq \sum_{i\geq 0}\alpha_iH(\nu_i)-\sum_{i\geq 0} \alpha_i\log\alpha_i.\]
\end{enumerate}
\end{fact}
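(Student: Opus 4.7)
My plan is to treat the two items independently; both are standard consequences of the concavity of the logarithm, which is why the paper omits the proof.

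For item (1), I would apply Jensen's inequality to the concave function $\log$. Writing $N = |\operatorname{supp}(\nu)|$ and summing over the support, I have
\[
H(\nu) \;=\; \sum_{x \in \operatorname{supp}(\nu)} \nu(x) \log\frac{1}{\nu(x)} \;\leq\; \log\!\left(\sum_{x \in \operatorname{supp}(\nu)} \nu(x)\cdot \frac{1}{\nu(x)}\right) \;=\; \log N,
\]
which is the desired inequality. (Equality occurs precisely when $\nu$ is uniform on its support, although this is not needed.)

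For item (2), I would compute $H(\nu)$ directly by swapping the two summations and then use the pointwise bound $\nu(x) \geq \alpha_i \nu_i(x)$ for every $i$ and every $x$, which gives $-\log \nu(x) \leq -\log \alpha_i - \log \nu_i(x)$ wherever $\nu_i(x) > 0$. Plugging this in:
\[
H(\nu) \;=\; -\sum_{x} \Bigl(\sum_i \alpha_i \nu_i(x)\Bigr)\log \nu(x) \;=\; \sum_i \alpha_i \sum_x \nu_i(x)\bigl(-\log \nu(x)\bigr).
\]
Applying the pointwise bound termwise and using $\sum_x \nu_i(x) = 1$, this is at most
\[
\sum_i \alpha_i \sum_x \nu_i(x)\bigl(-\log \alpha_i - \log \nu_i(x)\bigr) \;=\; -\sum_i \alpha_i \log \alpha_i \;+\; \sum_i \alpha_i H(\nu_i),
\]
which is exactly the claimed inequality.

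There is no real obstacle here: both arguments are one-line manipulations, so the only care needed is to restrict summations to the supports in order to avoid $0 \log 0$ issues (using the usual convention $0 \log 0 = 0$) and to note that the bound $\nu(x) \geq \alpha_i \nu_i(x)$ in item (2) is only used at points where $\nu_i(x) > 0$, so the logarithm is well defined there.
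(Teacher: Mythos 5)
Your proof is correct; the paper explicitly omits the proof of this Fact as elementary, and both of your arguments (Jensen's inequality for the concavity of $\log$ in item~(1), and the pointwise bound $\nu(x)\geq\alpha_i\nu_i(x)$ combined with a summation swap in item~(2)) are the standard ones. The only detail worth noting is that the interchange of the sums over $x$ and $i$ is justified by Tonelli since all terms $\alpha_i\nu_i(x)\bigl(-\log\nu(x)\bigr)$ are nonnegative, which you implicitly use.
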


The next fact  was proven independently by Kaimanovich and Vershik \cite[Theorem 2.1]{Kaimanovich-Vershik}, and by Derrienic \cite{Derrienic}.

\begin{fact}[``Shannon's Theorem'', \cite{Kaimanovich-Vershik, Derrienic}]\label{Shannon}
Let $\mu$ be a probability measure on a countable group $G$ such that $H(\mu)<\infty$, and let $h(\mu)$ be the random walk entropy. Then for almost every path $(g_n)_{n\in \N}$ of the random walk with step measure $\mu$ we have
\[h(\mu)=\lim_{n\to\infty}-\frac{1}{n}\log\mu^{*n}(g_n).\]
\end{fact}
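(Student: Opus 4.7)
The plan is to view $I_n(\omega) := -\log\mu^{*n}(g_n(\omega))$ as a subadditive cocycle over the Bernoulli shift on the path space, and then to apply Kingman's subadditive ergodic theorem. The deterministic limit it produces is forced to coincide with $h(\mu) = \lim_n \tfrac{1}{n} H(\mu^{*n})$ because $\E[I_n] = H(\mu^{*n})$ by the very definition of entropy.

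First I would set up the Bernoulli path space $\Omega = G^{\N}$ with measure $\P = \mu^{\otimes \N}$ and one-sided shift $\theta$, which is measure-preserving and ergodic. Writing $h_i$ for the coordinate projections and $g_n = h_n \cdots h_1$, associativity of the group product gives the decomposition
\[g_{n+m}(\omega) = g_n(\theta^m\omega)\cdot g_m(\omega).\]
Retaining a single summand in the convolution identity $\mu^{*(n+m)}(ab) = \sum_c \mu^{*n}(ac)\mu^{*m}(c^{-1}b)$ yields $\mu^{*(n+m)}(ab) \geq \mu^{*n}(a)\mu^{*m}(b)$ for every $a,b\in G$; taking logarithms gives the subadditive cocycle relation
\[I_{n+m}(\omega) \leq I_n(\theta^m\omega) + I_m(\omega).\]

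Next I would verify integrability. Since translation preserves entropy, the decomposition $\mu*\nu = \sum_g \mu(g)(\delta_g*\nu)$ combined with the second item of Fact~\ref{elementary properties of entropy} gives $H(\mu*\nu) \leq H(\mu) + H(\nu)$, hence by induction $H(\mu^{*n}) \leq n H(\mu) < \infty$. Therefore $I_n \in L^1(\P)$ with $\E[I_n] = H(\mu^{*n})$. Kingman's subadditive ergodic theorem, together with ergodicity of the Bernoulli shift, then delivers almost sure (and $L^1$) convergence of $\tfrac{1}{n} I_n$ to the deterministic constant $c = \inf_n \tfrac{1}{n}\E[I_n] = \inf_n \tfrac{1}{n} H(\mu^{*n})$.

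Finally, the subadditivity $H(\mu^{*(n+m)}) \leq H(\mu^{*n}) + H(\mu^{*m})$ established above implies that this infimum equals $\lim_n \tfrac{1}{n} H(\mu^{*n}) = h(\mu)$, so $c = h(\mu)$ as desired. The only point with any real content is the subadditive cocycle inequality for $I_n$; once it is in place everything follows from Kingman's theorem and from the elementary entropy inequalities recalled in Fact~\ref{elementary properties of entropy}.
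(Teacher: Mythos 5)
Your proof is correct. Note, however, that the paper does not actually prove this Fact: it is stated as a citation (Kaimanovich--Vershik, Theorem 2.1, and Derriennic), and only the two surrounding Facts (\ref{entropy vanishing criterion} and \ref{quantitative entropy vanishing criterion}) are proved in the appendix. So there is no in-paper argument to compare against; what you have supplied is essentially Derriennic's original proof via Kingman's subadditive ergodic theorem, and all the steps check out. The cocycle identity $g_{n+m}(\omega)=g_n(\theta^m\omega)\cdot g_m(\omega)$ is the right one for the left walk $g_n=h_n\cdots h_1$, and since $g_n(\theta^m\omega)$ and $g_m(\omega)$ depend on disjoint blocks of increments, the pointwise bound $\mu^{*(n+m)}(ab)\geq \mu^{*n}(a)\mu^{*m}(b)$ (keep the summand $c=e$ in your convolution identity) gives exactly the Kingman-type subadditivity $I_{n+m}\leq I_m + I_n\circ\theta^m$. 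The identities $\E[I_n]=H(\mu^{*n})$ (because $g_n$ has law $\mu^{*n}$) and $H(\mu^{*n})\leq nH(\mu)$ (via $\mu*\nu=\sum_g\mu(g)(\delta_g*\nu)$ and Fact \ref{elementary properties of entropy}) settle integrability, $I_n\geq 0$ rules out $-\infty$, and ergodicity of the Bernoulli shift makes the Kingman limit the constant $\inf_n\frac{1}{n}H(\mu^{*n})$, which equals $h(\mu)$ by Fekete. One can contrast this with the approach in the cited Kaimanovich--Vershik paper, which runs through a martingale/conditional-entropy argument; your route is shorter but outsources the hard analysis to Kingman's theorem, which is a perfectly acceptable black box here.
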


We now come to the criterion of vanishing of the random walk entropy that has been used to establish Theorem \ref{main}. We say that a positive function $f:\N\to \R$ \emph{grows sub-exponentially} if $\frac{1}{n}\log f(n)\to 0$.
\begin{fact}\label{entropy vanishing criterion}
Let $\mu$ be a probability measure on a countable group $G$ with finite entropy $H(\mu)$. The following are equivalent:
\begin{itemize}
\item[(i)] $h(\mu)=0$;
\item[(ii)] there exists a sequence of finite subsets $A_n\subset G$ with sub-exponentially growing  cardinality and such that $\mu^{*n}(A_n)\to 1$;
\item[(iii)] there exists a sequence of finite subsets $A_n\subset G$ with sub-exponentially growing cardinality  and such that $\mu^{*n}(A_n)$ is uniformly bounded away from zero.
\end{itemize}
\end{fact}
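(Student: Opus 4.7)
The plan is to prove the cycle (i) $\Rightarrow$ (ii) $\Rightarrow$ (iii) $\Rightarrow$ (i). The middle implication is immediate, and both non-trivial ones will follow from Fact \ref{Shannon}, which under the finite-entropy assumption identifies $h(\mu)$ with the almost-sure limit of $-\frac{1}{n}\log\mu^{*n}(g_n)$ along the random walk $(g_n)_{n\in\N}$ driven by $\mu$.

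For (i) $\Rightarrow$ (ii), I would introduce the entropy-typical set
\[A_n^\varepsilon := \{g \in G : \mu^{*n}(g) \geq e^{-\varepsilon n}\}, \qquad \varepsilon > 0,\]
whose cardinality is at most $e^{\varepsilon n}$ simply because $\mu^{*n}$ is a probability measure. Assuming $h(\mu) = 0$, Fact \ref{Shannon} gives $\mu^{*n}(A_n^\varepsilon) = \P(g_n \in A_n^\varepsilon) \to 1$ for each fixed $\varepsilon > 0$. A diagonal argument, applying this with $\varepsilon = 1/k$ on successive ranges $N_k \leq n < N_{k+1}$, then produces a single sequence $(A_n)$ with $\frac{1}{n}\log|A_n| \to 0$ and $\mu^{*n}(A_n) \to 1$.

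For (iii) $\Rightarrow$ (i), I would argue by contradiction. Suppose $h := h(\mu) > 0$, fix $\varepsilon \in (0,h)$, and set $C_n := \{g : \mu^{*n}(g) \geq e^{-(h-\varepsilon) n}\}$. Fact \ref{Shannon} (applied with threshold $h-\varepsilon < h$) gives $\mu^{*n}(C_n) \to 0$, since $g_n$ is eventually strictly more improbable than $e^{-(h-\varepsilon)n}$. For any sub-exponential sequence $(A_n)$ the split
\[\mu^{*n}(A_n) \leq \mu^{*n}(C_n) + |A_n \setminus C_n|\cdot e^{-(h-\varepsilon)n} \leq \mu^{*n}(C_n) + |A_n|\cdot e^{-(h-\varepsilon)n}\]
sends $\mu^{*n}(A_n) \to 0$, contradicting (iii).

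The real engine is Fact \ref{Shannon}; everything else is elementary counting. The only mild subtlety is the diagonal step in (i) $\Rightarrow$ (ii), where the windows $N_k \leq n < N_{k+1}$ must be chosen large enough so that simultaneously $|A_n| \leq e^{n/k}$ and $\mu^{*n}(A_n) \geq 1 - 1/k$ on each window; this is a routine bookkeeping exercise once the typical sets $A_n^\varepsilon$ are in hand.
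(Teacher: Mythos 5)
Your proof is correct, and it is built from the same ingredients as the paper's: the cycle (i) $\Rightarrow$ (ii) $\Rightarrow$ (iii) $\Rightarrow$ (i), the threshold sets $\{g : \mu^{*n}(g)\geq e^{-\varepsilon n}\}$ with the counting bound $|A_n^\varepsilon|\leq e^{\varepsilon n}$, and Shannon's theorem (Fact \ref{Shannon}) as the engine. The differences are in how the two nontrivial implications are run. For (i) $\Rightarrow$ (ii) the paper does not invoke Shannon at all: it uses only the definition $h(\mu)=\lim\frac{1}{n}H(\mu^{*n})$ together with the Markov-type inequality $\frac{1}{n}H(\mu^{*n})\geq (1-\mu^{*n}(B_n^\varepsilon))\,\varepsilon$, which is slightly more elementary than your appeal to almost-sure convergence plus bounded convergence (though both are valid, and $h(\mu)\leq H(\mu)<\infty$ by subadditivity so Shannon applies). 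For (iii) $\Rightarrow$ (i) the paper argues directly: it discards from $A_n$ the elements of mass below $e^{-\varepsilon n}$, notes the remainder $A_n'$ still carries probability bounded away from zero, deduces that $g_n\in A_n'$ infinitely often on a set of positive measure, and concludes $h(\mu)\leq\varepsilon$ from the almost-sure limit in Shannon's theorem. Your contrapositive version --- if $h>0$ then $\mu^{*n}(C_n)\to 0$ for the super-typical set $C_n$ and hence $\mu^{*n}(A_n)\to 0$ for \emph{every} sub-exponential sequence --- is arguably cleaner, since it avoids the ``infinitely often with positive probability'' step and isolates a slightly stronger statement. Both routes are sound; yours trades a little elementarity in (i) $\Rightarrow$ (ii) for a tidier (iii) $\Rightarrow$ (i).
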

This is a well-known reformulation of the entropy criterion of Kaimanovich and Vershik \cite[Theorem 1.1]{Kaimanovich-Vershik} and Derrienic \cite{Derrienic}. For the convenience of the reader we provide a proof.
\begin{proof}
(i) $\Rightarrow$ (ii). Fix $\varepsilon >0$.  Set
\[B_n^\varepsilon=\{g\in G\: :\: \mu^{*n}(g)\geq e^{-\varepsilon n}\}.\]
If $g\notin  B_n^\varepsilon$, we have $-\frac{1}{n}\log\mu^{*n}(g)\geq \varepsilon$, thus
\[\frac{1}{n}H(\mu^{*n})=-\frac{1}{n}\sum_{g\in G}\mu^{*n}(g)\log\mu^{*n}(g)\geq-\frac{1}{n}\sum_{g\notin B^n_\varepsilon}\mu^{*n}(g)\log\mu^{*n}(g)\geq(1-\mu^{*n}(B_n^\varepsilon))\varepsilon.\]
Since by {(i)} the left hand side tends to 0, this proves that $\mu^{*n}(B_n^{\varepsilon})\to 1$. On the other hand the choice of $B_n^\varepsilon$  implies that $|B_n^\varepsilon| \leq e^{\varepsilon n}$ and thus
\[\limsup_{n\to\infty}\frac{1}{n}\log |B_n^\varepsilon|\leq \varepsilon.\] A diagonal extraction argument provides a sequence $\varepsilon_n $ decreasing to zero such that $A_n=B_n^{\varepsilon_n}$ verifies (ii).

(ii)$\Rightarrow$ (iii) is obvious.

(iii)$\Rightarrow$ (i). Let $A_n$ be as in (iii). Fix $\varepsilon>0$ and set
\[\tilde{A}_n=\{g\in A_n\::\:\mu^{*n}(g)\leq e^{-\varepsilon n}\}.\]
Observe that since $\tilde{A}_n\subset A_n$ and $|A_n|$ grows sub-exponentially we have
\[\mu^{*n}(\tilde{A}_n)\leq e^{-\varepsilon n}|\tilde{A}_n|\leq e^{-\varepsilon n}|{A}_n|\to 0.\]
Hence if we set $A'_n=A_n\setminus \tilde{A_n}$ we have that $\mu^{*n}(A'_n)$ is still bounded away from zero. Moreover every $g\in A'_n$ verifies $\mu^{*n}(g)\geq e^{-\varepsilon n}$. Since $\mu^{*n}(A'_n)$ is bounded away from zero, the set of random walk paths $(g_n)_{n\in \N}$ with the property that $g_n\in A'_n$ for infinitely many $n$ has positive probability. For such a path we have $-\frac{1}{n}\log\mu^{*n}(g_n)\leq \varepsilon$ infinitely many times. The random walk entropy can be computed using Fact  \ref{Shannon} (Shannon's Theorem) and restricting  to the (positive-measured) set of paths with this property. We conclude that $h(\mu)\leq \varepsilon$. This implies that $h(\mu)=0$ since $\varepsilon$ was arbitrary.
\end{proof}
\begin{fact}[Quantitative version of Fact \ref{entropy vanishing criterion}]\label{quantitative entropy vanishing criterion}
Let $\mu$ be a probability measure with finite entropy on a countable group $G$ such that $h(\mu)=0.$
\begin{enumerate}
\item If $\mu$ is finitely supported with support $S$ and $A_n$ are finite sets as in  (ii) of Fact \ref{entropy vanishing criterion}, the entropy of the convolutions $H(\mu^{*n})$ satisfies
\[H(\mu^{*n})\leq \log |A_n| +n(1-\mu^{*n}(A_n))\log |S|+\log 2.\]
\item If $\mu$ is symmetric and $A_n$ are as in  Fact \ref{entropy vanishing criterion} (ii) or (iii), there exists $C>0$ such that the return probabilities at even times satisfy for every $n\geq 1$
\[\mu^{*2n}(e)\geq \frac{1}{C|A_{2n}|}.\]
\end{enumerate}
\end{fact}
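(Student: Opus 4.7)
The plan is to derive Part~1 directly from the elementary properties of entropy (Fact~\ref{elementary properties of entropy}), and to derive Part~2 from a Cauchy--Schwarz argument that uses symmetry of $\mu$ crucially.

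For Part~1, I would decompose $\mu^{*n}$ as a convex combination of conditional measures on $A_n$ and its complement. Set $p_n=\mu^{*n}(A_n)$, and write $\mu^{*n}=p_n\nu_A+(1-p_n)\nu_B$, where $\nu_A$ and $\nu_B$ are the conditional distributions of $\mu^{*n}$ on $A_n$ and $G\setminus A_n$ respectively. Applying the concavity-type inequality of Fact~\ref{elementary properties of entropy}(2) gives
\[H(\mu^{*n})\leq p_n\,H(\nu_A)+(1-p_n)\,H(\nu_B)-p_n\log p_n-(1-p_n)\log(1-p_n).\]
The last two terms sum to at most $\log 2$. By Fact~\ref{elementary properties of entropy}(1), $H(\nu_A)\leq \log|A_n|$, and $H(\nu_B)\leq \log|\operatorname{supp}(\mu^{*n})|\leq n\log|S|$, since every element in the support of $\mu^{*n}$ is a product of $n$ elements of $S$. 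Using $p_n\leq 1$ to replace the first term by $\log|A_n|$ yields the bound claimed in Part~1.

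For Part~2, the key observation is that symmetry of $\mu$ makes $\mu^{*2n}(e)$ dominate every other value of the convolution. Indeed, symmetry gives $\mu^{*n}(g^{-1})=\mu^{*n}(g)$, hence
\[\mu^{*2n}(e)=\sum_{g\in G}\mu^{*n}(g)\mu^{*n}(g^{-1})=\sum_{g\in G}\mu^{*n}(g)^2.\]
For arbitrary $h\in G$, writing $\mu^{*2n}(h)=\sum_g \mu^{*n}(g)\mu^{*n}(g^{-1}h)$, Cauchy--Schwarz together with the substitution $k=g^{-1}h$ yields $\mu^{*2n}(h)\leq \mu^{*2n}(e)$. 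Summing this inequality over $h\in A_{2n}$ gives
\[|A_{2n}|\,\mu^{*2n}(e)\geq \sum_{h\in A_{2n}}\mu^{*2n}(h)=\mu^{*2n}(A_{2n})\geq c>0,\]
where $c$ is the uniform lower bound on $\mu^{*n}(A_n)$ that holds under condition (iii) of Fact~\ref{entropy vanishing criterion} (and trivially under (ii) as well). This is precisely the claim with $C=1/c$.

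The only non-mechanical point is the symmetry/Cauchy--Schwarz inequality $\mu^{*2n}(h)\leq \mu^{*2n}(e)$ in Part~2; once one identifies $\mu^{*2n}(e)$ with the squared $\ell^2$-norm of $\mu^{*n}$, this becomes a one-line computation. Part~1 is essentially bookkeeping once one observes that $\operatorname{supp}(\mu^{*n})\subseteq S^n$, so that the conditional distribution on the complement $G\setminus A_n$ cannot carry more than $n\log|S|$ bits of entropy.
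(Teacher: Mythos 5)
Your proof is correct and follows essentially the same route as the paper's: the same decomposition of $\mu^{*n}$ into its conditional distributions on $A_n$ and its complement combined with Fact \ref{elementary properties of entropy} for Part 1, and the same symmetry/Cauchy--Schwarz identification of $\mu^{*2n}(e)$ with $\|\mu^{*n}\|_{\ell^2}^2$ dominating $\mu^{*2n}(h)$ for Part 2. No gaps.
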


\begin{proof}
1. Let $\nu_{1,n},\nu_{2,n}$ be the restrictions of $\mu^{*n}$ to $A_n$  and its complement, respectively. Set $p_n=\mu^{*n}(A_n)$ and $q_n=1-p_n$, so that $\mu^{*n}=p_n\nu_{1,n}+q_n\nu_{2,n}$. Observe that $\supp\nu_{1,n}\subset A_n$ and $\supp\nu_{2,n}\subset S^n$. Using Point 2 and  Point 1 of Fact \ref{elementary properties of entropy} we have
  \begin{align*} 
 H(\mu^{*n})\leq &p_nH(\nu_{1,n})+q_nH(\nu_{2,n})-p_n\log p_n-q_n\log q_n\leq\\
 &H(\nu_{1,n})+q_nH(\nu_{2,n})+\log2\leq \log|A_n| + nq_n\log|S|+\log 2.
 \end{align*}
 where we have bounded above $p_n$ by 1 and we have used that $-p_n\log p_n-q_n\log q_n\leq \log 2$.
 
 2. It is well-known that if $\mu$ is symmetric the return probability at even times $\mu^{2n}(e)$ maximize $\mu^{*2n}(g)$ for $g\in G$, namely by Cauchy-Schwartz
 \begin{align*}\mu^{*2n}(g)=\sum_{h\in G} \mu^{*n}(gh^{-1})\mu^{*n}(h)\leq \sqrt{\sum_{h\in G} \mu^{*n}(gh^{-1})^2\sum_{h\in G} \mu^{*n}(h)^2}=\\\sum_{h\in G}\mu^{*n}(h)^2=\sum_{h\in G} \mu^{*n}(h^{-1})\mu^{*n}(h)=\mu^{*2n}(e),\end{align*}
where equality between the first and the second line is a variable change in one of the sums, and we have used symmetry in the second line.
Let $C>0$ be such that $1/C$ is a lower bound for $\mu^{*2n}(A_{2n})$. We have
\[\frac{1}{C}\leq \mu^{*2n}(A_{2n})=\sum_{g\in A_{2n}}\mu^{2n}(g)\leq |A_{2n}| \mu^{*2n}(e),\]
which completes the proof.
\end{proof}

\bibliography{these}
\bibliographystyle{alpha}

\end{document}